\def\hat{\widehat}
\renewcommand\bar{\overline}
  \newcommand\cD{\mathcal{D}}  \newcommand\cL{\mathcal{L}} \newcommand\cO{\mathcal{O}}
\def\fm{{\mathfrak m}}
\def\fp{{\mathfrak p}}
\def\CC{{\mathbb C}}
\def\ZZ{{\mathbb Z}}
\def\PP{{\mathbb P}}
\def\ra{\rightarrow}
\newcommand{\iso}{\stackrel{\sim}{\rightarrow}}
\def\oh{\mathcal{O}}
\def\hat{\widehat}
\def\red{\mathop{\rm red}\nolimits}
\def\GL{\mathop{\rm GL}\nolimits}
\def\SL{\mathop{\rm SL}\nolimits}
\def\grad{\mathop{\rm grad}\nolimits}
\def\Hom{\mathop{\rm Hom}\nolimits}
\def\Cat{\mathop{\rm Cat}\nolimits}
\def\s{\mathop{\rm s}\nolimits}
\def\ss{\mathop{\rm ss}\nolimits}
\def\ps{\mathop{\rm ps}\nolimits}
\def\Res{\mathop{\rm Res}\nolimits}
\def\id{\mathop{\rm id}\nolimits}
\def\Soc{\mathop{\rm Soc}\nolimits}
\def\Jac{\mathop{\rm Jac}\nolimits}
\def\jac{\mathop{\rm jac}\nolimits}
\def\Spec{\mathop{\rm Spec}\nolimits}
\def\Proj{\mathop{\rm Proj}\nolimits}
\def\Gr{\mathop{\rm Gr}\nolimits}
\def\Sym{\mathop{\rm Sym}\nolimits}
\def\wt{\mathop{\rm wt}\nolimits}
\def\codim{\mathop{\rm codim}\nolimits}
\def\gitq{/\hspace{-0.1cm}/}
\def\hess{\mathop{\rm hess}\nolimits}
\def\Hess{\mathop{\rm Hess}\nolimits}
\newcommand{\bA}{\mathbf{A}}
\newcommand{\bB}{\mathbf{B}}
\newcommand{\bbf}{\mathbf{f}}
\newcommand\co{\colon} 
\newtheorem{theorem}{THEOREM}[section]
\newtheorem{proposition}[theorem]{Proposition}
\newtheorem{conjecture}[theorem]{Conjecture}
\theoremstyle{definition}
\newtheorem{example}[theorem]{Example}
\newtheorem{ques}[theorem]{Question}
\newtheorem{prop}[theorem]{Proposition}
\newtheorem{cor}[theorem]{Corollary}
\newtheorem{lemma}[theorem]{Lemma}
\theoremstyle{remark}
\newtheorem{remark}[theorem]{Remark}
\def\blfootnote{\xdef\@thefnmark{}\@footnotetext}
\begin{document}

\title[Associated Forms: The Binary Case]{Associated Forms and Hypersurface
\vspace{0.1cm}\\
Singularities: The Binary Case}\blfootnote{{\bf Mathematics Subject Classification:} 14L24, 13A50, 13H10, 32S25}\blfootnote{{\bf Keywords:} classical invariant theory, geometric invariant theory, Milnor algebras, Artinian Gorenstein algebras, isolated hypersurface singularities.}
\author[Alper]{Jarod Alper}
\author[Isaev]{Alexander Isaev}

\address[Alper]{Mathematical Sciences Institute\\
Australian National University\\
Acton, ACT 2601, Australia}
\email{jarod.alper@anu.edu.au}

\address[Isaev]{Mathematical Sciences Institute\\
Australian National University\\
Acton, ACT 2601, Australia}
\email{alexander.isaev@anu.edu.au}

\maketitle

\thispagestyle{empty}

\pagestyle{myheadings}

\begin{abstract}

In the recent articles \cite{EI} and \cite{ai}, it was conjectured that all rational $\GL_n$-invariant functions of forms of degree $d\ge 3$ on $\CC^n$ can be extracted, in a canonical way, from those of forms of degree $n(d-2)$ by means of assigning to every form with nonvanishing discriminant the so-called associated form. The conjecture was confirmed in {\rm \cite{EI}} for binary forms of degree $d \le 6$ as well as for ternary cubics. Furthermore, a weaker version of it was settled in \cite{ai} for arbitrary $n$ and $d$. In the present paper, we focus on the case $n=2$ and establish the conjecture, in a rather explicit way, for binary forms of an arbitrary degree.
\end{abstract}

\section{Introduction}\label{intro}
\setcounter{equation}{0}

This paper concerns a new development in classical invariant theory that originated in the recent article \cite{EI} and was further explored in \cite{ai}, \cite{AIK}. Let $V$ be a complex vector space of dimension $n$ and
$\cO(V)_{d}$ the vector space of forms of degree $d$ on $V$. We assume that $n\ge 2$ and $d\ge 3$. Let $\cO(V)_{d, \Delta}$ be the affine open subvariety of $\cO(V)_d$ consisting of forms for which the discriminant $\Delta$ does not vanish. Upon choosing coordinates, $V$ is identified with $\CC^n$ and we may regard a form $f\in \cO(V)_{d}$ as a polynomial in $x_1, \ldots, x_n$ of degree $d$. Consider the Milnor algebra $M(f) := \CC[x_1,\dots,x_n]/(f_{x_{{}_1}}, \ldots, f_{x_{{}_n}})$ of the isolated singularity at the origin of the hypersurface in $V$ defined by $f\in  \cO(V)_{d,\Delta}$ and let $\fm \subseteq M(f)$ be the maximal ideal. One can then introduce a form on the $n$-dimensional quotient $\fm/\fm^2$ with values in the one-dimensional socle $\Soc(M(f))$ of $M(f)$ as follows:
$$
\fm/\fm^2 	 \to \Soc(M(f)), \quad z  \mapsto \hat z^{\,n(d-2)},
$$
where $\hat z$ is any element of ${\mathfrak m}$ that projects to $z\in{\mathfrak m}/{\mathfrak m}^2$.  There is a canonical isomorphism ${\mathfrak m}/{\mathfrak m}^2\cong V^{\vee}$ and, since the Hessian of $f$ generates the socle, there is also a canonical isomorphism $\Soc(M(f)) \cong \CC$. Hence, one obtains a form $A(f)$ of degree $n(d-2)$ on  $V^{\vee}$, which is called the {\it associated form}\, of $f$ (see Section \ref{setup} for more details on this definition). This form is very natural and in fact has alternative interesting descriptions:  $A(f)$ is a Macaulay inverse system for the Milnor algebra $M(f)$ (see Proposition \ref{P:inverse-system}) and is also a scalar multiple of the residue mapping
$$
\oh(V)_{n(d-2)}  	\to \CC , \quad
g					 \mapsto \Res\left[\begin{array}{c} g\, dx_1\wedge\cdots\wedge dx_n\\ f_{x_{{}_1}}, \dots, f_{x_{{}_n}}\end{array}\right]
$$
(see~\cite[Chap.~III, \S 9]{har}) upon identifying $\oh(V^{\vee})_{n(d-2)}$ and $\oh(V)_{n(d-2)}^{\vee}$ via the polar pairing.

The main object of our study is the morphism
$$A\co  \cO(V)_{d,\Delta} \to \cO(V^{\vee})_{n(d-2)}, \quad f \mapsto A(f)$$
of affine varieties. Upon the identification $V \cong V^{\vee} \cong \CC^n$, the map $A$ is a morphism from the affine variety of nondegenerate forms of degree $d$ on $\CC^n$ to the affine space of forms of degree $n(d-2)$ on $\CC^n$. As first observed in \cite{EI}, for certain values of $n$ and $d$ one can recover all $\GL_n$-invariant rational functions on forms of degree $d$ from those on forms of degree $n(d-2)$ by evaluating the latter on associated forms, i.e., by composing them with $A$. Motivated by the above fact, in \cite{ai} we proposed a conjecture asserting that an analogous statement holds for all $n$ and $d$.  Precisely, the conjecture stated in \cite{ai} is:

\begin{conjecture}\label{C:main} For every regular $\GL(V)$-invariant function $I$ on $\cO(V)_{d, \Delta}$ there exists a rational\, $\GL(V^{\vee})$-invariant function $J$ on $\cO(V^{\vee})_{n(d-2)}$ defined on the image of $A$ such that $I=J \circ A$.
\end{conjecture}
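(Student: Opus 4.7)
The plan is to exploit the $\GL$-equivariance of the associated-form morphism $A$ together with an \emph{orbit-injectivity} property, thereby identifying the rational-invariant function fields of $\cO(V)_{d,\Delta}$ and $A(\cO(V)_{d,\Delta})$. Observe first that under the natural identification $\GL(V) \cong \GL(V^\vee)$, the morphism $A$ is $\GL$-equivariant: this is immediate from the coordinate-free definition via the Milnor algebra (equivalently, from the residue description). Consequently every $\GL(V)$-invariant regular function $I$ on $\cO(V)_{d,\Delta}$ is constant on the preimage of each $\GL(V^\vee)$-orbit that $A$ meets, so the strategy reduces to showing that $A$ identifies $\GL$-orbits and that the resulting bijection of orbits lifts to an embedding of invariant function fields.

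The crux is the following orbit-injectivity: $A(f)$ and $A(g)$ lie in a common $\GL(V^\vee)$-orbit if and only if $f$ and $g$ lie in a common $\GL(V)$-orbit. Combine two ingredients. First, by Proposition~\ref{P:inverse-system}, $A(f)$ is a Macaulay inverse system for the graded Artinian Gorenstein algebra $M(f)$; the classical Macaulay correspondence then ensures that the $\mathbb{C}^* \times \GL(V^\vee)$-orbit of $A(f)$ reconstructs $M(f)$ up to graded $\mathbb{C}$-algebra isomorphism. Second, apply a graded version of the Mather--Yau theorem: for $f,g \in \cO(V)_{d,\Delta}$, a graded isomorphism $M(f) \cong M(g)$ forces $f$ and $g$ into the same $\mathbb{C}^* \times \GL(V)$-orbit. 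The scalar factor is absorbed by the homogeneity of $A$ (the map $A$ scales $f \mapsto \lambda f$ by a fixed power of $\lambda$), yielding the desired orbit-injectivity.

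Given orbit-injectivity and equivariance, Rosenlicht's theorem produces $\GL$-stable dense open subsets on which geometric quotients $\cO(V)_{d,\Delta}/\GL(V)$ and $A(\cO(V)_{d,\Delta})/\GL(V^\vee)$ exist as irreducible algebraic varieties. The induced morphism on quotients is bijective onto its image, hence (since we are over $\mathbb{C}$ and both quotients are irreducible of necessarily equal dimension) birational onto its image by Zariski's main theorem. Pullback via $A$ therefore identifies $\mathbb{C}(\cO(V)_{d,\Delta})^{\GL(V)}$ with the field of rational $\GL(V^\vee)$-invariants on $A(\cO(V)_{d,\Delta})$, so every regular $\GL(V)$-invariant $I$ equals $J \circ A$ for some $\GL(V^\vee)$-invariant $J \in \mathbb{C}(\cO(V^\vee)_{n(d-2)})$; since $I$ is regular on the entire source and $A$ surjects onto its image, the indeterminacy locus of $J$ must be disjoint from $A(\cO(V)_{d,\Delta})$, which is the definedness condition required by the conjecture.

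The main obstacle is the graded Mather--Yau step for arbitrary $n$. For binary forms the Hilbert function $(1,2,\ldots,2,1)$ together with elementary socle-pairing arguments quickly recovers $f$ from $M(f)$, but for $n \geq 3$ the ring structure is much richer and one must either reconstruct $f$ directly from the multiplication pairing $\fm^{d-2} \otimes \fm^{d-2} \to \Soc(M(f))$ together with the socle generator (the Hessian), or prove an infinitesimal rigidity result for the orbit of $f$ among all forms with a prescribed Milnor algebra. A secondary subtlety is to control the image $A(\cO(V)_{d,\Delta}) \subseteq \cO(V^\vee)_{n(d-2)}$ well enough to justify the descent: one must identify which graded Artinian Gorenstein quotients of $\Sym^\bullet V^\vee$ with Hilbert function $(1,n,\ldots,n,1)$ actually arise as Milnor algebras of nondegenerate forms, a question whose complexity grows sharply with $n$ and which may require delicate analysis of free resolutions.
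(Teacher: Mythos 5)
The statement you are proving is Conjecture~\ref{C:main}, which the paper does \emph{not} prove in general: it establishes only the binary case $n=2$ (Theorem~\ref{T:binary}), by an entirely different route (factoring $A$ through the Grassmannian $\Gr(n,\oh(V)_{d-1})$, proving that $\nabla$ preserves semistability and polystability, and showing that the induced map of GIT quotients is finite, injective and has normal image, hence is a closed immersion). Your proposal, read as a proof of the general conjecture, has two genuine gaps. First, the step you flag yourself --- that a graded isomorphism $M(f)\cong M(g)$ forces $f$ and $g$ into the same orbit for all $n$ --- is not supplied; for $n=2$ the paper gets the analogous orbit-injectivity from Donagi's result \cite[Prop.~1.1]{D} applied to \emph{polystable} representatives, which is exactly the kind of input you would need and do not provide for $n\ge 3$. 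An honest assessment is that everything your Rosenlicht argument delivers is the \emph{generic} statement, i.e.\ that $\bar A$ is birational onto its image; but that is already known for all $n$ and $d$ (Theorem~\ref{T:ai}, which is \cite[Thm.~4.1]{ai}), so up to that point you have not gone beyond the ``weaker variant'' the introduction refers to.

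Second, and more seriously, the concluding inference is invalid. From the identification of invariant function fields you obtain a rational $J$ with $I=J\circ A$ on a dense open subset, but the conjecture demands a $J$ defined at \emph{every} point of the image of $A$. Your justification --- ``since $I$ is regular on the entire source and $A$ surjects onto its image, the indeterminacy locus of $J$ must be disjoint from $A(\cO(V)_{d,\Delta})$'' --- does not follow: a rational function can be undefined along a divisor meeting the image in a proper closed subset while its pullback under $A$ still extends to a regular function on the source; nothing forces the particular representative $J$ you constructed to be defined there, and whether \emph{some} representative in the same class modulo functions vanishing on the image can be chosen to be defined on all of $A(\cO(V)_{d,\Delta})$ is precisely the open content of the conjecture. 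Closing this gap is why the paper works so hard in the case $n=2$ to upgrade birationality to a closed immersion of GIT quotients (injectivity on polystable orbits via preservation of polystability, plus normality of the image via the determinantal analysis of $W_d$, plus Zariski's Main Theorem); no such control of the boundary of the image is present in your argument.
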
 

In other words, the conjecture states that the invariant theory of forms of degree $d$ can be extracted, in a canonical way, from that of forms of degree $n(d-2)$ at least at the level of rational invariant functions. In \cite{EI}, Conjecture \ref{C:main} was shown to hold for binary forms of degrees $3\le d\le 6$ as well as for ternary cubics, and in \cite{ai} a weaker variant was established for arbitrary $n$ and $d$. 

As explained in \cite{EI} and \cite{ai}, the original motivation for studying associated forms came from singularity theory, namely, the well-known Mather-Yau theorem. This theorem states that two isolated hypersurface singularities in $\CC^n$ are biholomorphically equivalent if and only if their Tjurina algebras are isomorphic. The proof of this theorem is not constructive, and finding an effective way of recovering a hypersurface germ from its Tjurina algebra is a long-standing open question called the reconstruction problem. If Conjecture \ref{C:main} is settled, it will provide a method for extracting a complete system of biholomorphic invariants of homogeneous hypersurface singularities from their Milnor algebras (which coincide with their Tjurina algebras). Such a system of invariants can be regarded as a solution to the reconstruction problem in this case, and settling the reconstruction problem was our original goal.  

On the other hand,  Conjecture \ref{C:main} is rather interesting from the purely invariant-theoretic viewpoint
and surprisingly enlightening even in the case of binary forms.
 In the main result of the present paper, we settle a stronger version of the conjecture for $n=2$ as follows: 

\begin{theorem}\label{T:binary} The morphism $A$ maps $\cO(\CC^2)_{d, \Delta}$ to $\oh(\CC^2)_{2(d-2), \Cat}$, and for every regular $\GL_2$-invariant function $I$ on $\cO(\CC^2)_{d, \Delta}$ there exists a regular $\GL_2$-invariant function $J$ on $\oh(\CC^2)_{2(d-2), \Cat}$ such that $I = J \circ A$, where $\Cat$ denotes the catalecticant invariant.\end{theorem}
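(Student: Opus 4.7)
The plan is to prove the two assertions of the theorem in turn.

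For the inclusion $A(\cO(\CC^2)_{d,\Delta}) \subseteq \oh(\CC^2)_{2(d-2),\Cat}$, I would proceed by a direct Hilbert-function computation. For $f \in \cO(\CC^2)_{d,\Delta}$, the partials $f_{x_1}, f_{x_2}$ form a regular sequence in $\CC[x_1,x_2]$, so $M(f)$ is a graded complete intersection of type $(d-1, d-1)$ with Hilbert series $(1 + t + \cdots + t^{d-2})^2$. In particular, $\dim_\CC M(f)_{d-2} = d-1 = \dim_\CC \CC[x_1, x_2]_{d-2}$, the maximum possible value. Invoking Proposition \ref{P:inverse-system} to identify $M(f)$ with the apolar algebra of $A(f)$, this maximality forces the apolarity pairing $\CC[x_1, x_2]_{d-2} \otimes \CC[x_1, x_2]_{d-2} \to \CC$ associated to $A(f)$ to be nondegenerate, which is precisely the nonvanishing of the middle catalecticant: $\Cat(A(f)) \neq 0$.

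For the invariant-lifting assertion, the strategy is to show that the induced morphism on GIT quotients
$$
\bar A\co \cO(\CC^2)_{d, \Delta}\gitq\GL_2 \ \longrightarrow\ \oh(\CC^2)_{2(d-2), \Cat}\gitq\GL_2
$$
is a closed immersion, which is equivalent to surjectivity of $\bar A^{*}$ on rings of $\GL_2$-invariants and hence yields $I = J \circ A$ for some regular $\GL_2$-invariant $J$. The map $A$ is $\GL_2$-equivariant up to a character (reflecting the scaling identity $A(\lambda f) = \lambda^{-2} A(f)$), a fact essentially recorded already in the earlier papers in the series. To verify that $\bar A$ is a closed immersion I would check: (i) set-theoretic injectivity, (ii) injectivity of all tangent maps, and (iii) closedness of the image. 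For (i), Macaulay duality gives that $A(f_1) = A(f_2)$ implies $((f_1)_{x_1}, (f_1)_{x_2}) = ((f_2)_{x_1}, (f_2)_{x_2})$ as ideals in $\CC[x_1, x_2]$, so $\grad f_2$ is obtained from $\grad f_1$ by a $\GL_2$-change of basis; the integrability constraint (that the rotated pair actually arises as the gradient of some polynomial $f_2$) then forces, for generic $f_1$, this change of basis to be scalar, and the Hessian normalization pins the scalar down to $\pm 1$. Both ambiguities lie in a single $\GL_2$-orbit (e.g.\ via scaling by a $d$-th root of $-1$), so $\bar A$ is set-theoretically injective.

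The step I expect to be the main obstacle is (iii), the closedness of $\mathrm{im}(A)$ inside $\oh(\CC^2)_{2(d-2), \Cat}$. A dimension count ($\dim \cO(\CC^2)_{d,\Delta} = d+1$ versus $\dim \oh(\CC^2)_{2(d-2)} = 2d-3$) shows that for $d \geq 5$ the image has positive codimension and is the locus of those $g$ whose canonical degree-$(d-1)$ generators of the apolar ideal admit an integrable $\CC$-linear recombination. Encoding this condition as an explicit system of polynomial vanishings in the coefficients of $g$, and verifying that it defines a Zariski-closed subvariety (not merely a constructible one) inside the catalecticant-open locus, is the most delicate task. Once (i)--(iii) are in hand, reductivity of $\GL_2$ delivers surjectivity of the restriction $\CC[\oh(\CC^2)_{2(d-2),\Cat}]^{\GL_2} \twoheadrightarrow \CC[\mathrm{im}(A)]^{\GL_2} \cong \CC[\cO(\CC^2)_{d, \Delta}]^{\GL_2}$, producing the required regular $\GL_2$-invariant $J$ on $\oh(\CC^2)_{2(d-2), \Cat}$ with $I = J \circ A$.
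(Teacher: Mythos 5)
Your first paragraph is essentially the paper's own argument (Proposition \ref{nonzerocatal}): the Hilbert function of the complete intersection $M(f)$ gives $\dim M(f)_{d-2}=d-1$, and via the inverse-system identification this is exactly the nonvanishing of the middle catalecticant. That part is correct. The reduction of the lifting statement to $\bar A$ being a closed immersion of affine GIT quotients is also the right framing.

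The second half, however, has two genuine gaps. First, your injectivity argument (i) is only carried out ``for generic $f_1$,'' but injectivity of $\bar A$ must hold at every point of the quotient, including the points represented by strictly semistable and boundary orbits; moreover, on a GIT quotient two forms map to the same point iff their orbit \emph{closures} meet, so equality $A(f_1)=A(f_2)$ of associated forms is not the right hypothesis to start from. The paper handles this by first proving that $\nabla\colon f\mapsto\langle f_x,f_y\rangle$ preserves polystability (Proposition \ref{P:gradient-ps}), so that one may compare polystable representatives, and then invoking Donagi's result that the Jacobian ideal determines a form up to linear equivalence; the exceptional orbits $x^d+y^d$ and $x^{d-1}y$ require separate treatment (Lemma \ref{P:binary-linearly-equivalent}). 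Second, and more seriously, your criterion ``injective $+$ injective on tangent spaces $+$ closed image $\Rightarrow$ closed immersion'' is not justified without properness or finiteness of the map onto its image, you give no argument at all for tangent-map injectivity on the (singular) quotient, and your plan for (iii) --- writing down the integrability conditions as explicit polynomial equations --- is precisely the step you cannot carry out as stated. The paper's route around all of this is structural: factor $A=\bA\circ\nabla$ through $\Gr(2,\oh(\CC^2)_{d-1})_{\Res}$, prove $\bA$ is an \emph{isomorphism} onto the catalecticant locus (Proposition \ref{P:A-iso}), prove $\bar\nabla$ is finite and injective via semistability/polystability preservation, and then --- the technical heart, entirely absent from your proposal --- prove that the image of $\bar\nabla$ is \emph{normal} by identifying its affine cone $W_d$ with a linear section of a generic determinantal variety, showing it is Cohen--Macaulay and smooth in codimension one (Proposition \ref{P:normal}), so that Zariski's Main Theorem upgrades ``finite injective onto a normal image'' to ``closed immersion.'' Without some substitute for this normality argument, your outline cannot be completed.
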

\noindent Upon showing that $A$ sends $\cO(\CC^2)_{d, \Delta}$ to $\oh(\CC^2)_{2(d-2), \Cat}$, this theorem is equivalent to the statement that the induced map
$$\bar{A} \co \cO(\CC^2)_{d, \Delta} \gitq \GL_2 \to \cO(\CC^2)_{2(d-2), \Cat} \gitq \GL_2$$
of affine good GIT quotients is a closed immersion.

\medskip 

\noindent {\bf Sketch of the proof of Theorem \ref{T:binary}.} 
In general, the projectivization of the morphism $A$, which assigns to every nondegenerate form $f$ the associated form $A(f)$, factors as 
$$A: \PP(\oh(V)_d)_{\Delta} \xrightarrow{\nabla} \Gr(n, \oh(V)_{d-1})_{\Res} \xrightarrow{\bA} \PP(\cO(V^{\vee})_{n(d-2)}),
$$
where $\bA$ is the morphism that assigns to every $n$-dimensional subspace $W \subseteq \oh(V)_{d-1}$ with nonvanishing resultant a form $\bA(W)$ defined analogously to the associated form, and where $\nabla$ is the morphism that assigns to a form the subspace in $ \oh(V)_{d-1}$ generated by its first-order partial derivatives.  In Section \ref{setup}, we discuss details of this factorization and in Section \ref{GITinterpr} we interpret Conjecture \ref{C:main} in terms of the GIT quotients of $\bA$ and $\nabla$.

To prove Theorem \ref{T:binary}, we proceed by showing that:
\begin{enumerate}
	\item the morphism $\nabla$ preserves semistability,
	\item the induced morphism $\bar{\nabla}\co\PP(\oh(\CC^2)_d)^{\ss} \gitq \SL_2 \to \Gr(2, \oh(\CC^2)_{2(d-2)})^{\ss} \gitq \SL_2$ of GIT quotients is a closed immersion, and
	\item the morphism $\bA \co \Gr(2, \oh(\CC^2)_{d-1})_{\Res} \to \PP(\oh(\CC^2)_{2(d-2)})_{\Cat}$ is an isomorphism. \end{enumerate}

Step (1) is established in Proposition \ref{P:gradient-ss}.  It is not hard to see that (1) implies that
$\bar{\nabla}$ is a finite morphism. In fact, we show that $\bar{\nabla}$ is finite and injective (Corollary \ref{C:finite-injective}). Therefore, to establish (2), it suffices to prove that the image of $\bar{\nabla}$ is normal, which we obtain in Corollary \ref{C:normal}.  Step (3) is established in Proposition \ref{P:A-iso}.

{\bf Acknowledgements.} We are grateful to Daniel Erman for his valuable suggestions, which were instrumental for the proof of Proposition \ref{P:normal}. We also thank Felix Janda for explaining to us the connection between associated forms and the residue symbol, as well as Maksym Fedorchuk for comments on an earlier draft of the paper. Special thanks go to the referee who put a tremendous amount of effort into understanding our results and made extensive recommendations, which simplified many of our arguments and greatly improved the exposition of this paper. This work is supported by the Australian Research Council.

\section{The associated form: definition and properties}\label{setup}
\setcounter{equation}{0}

\subsection{The associated form of a nondegenerate form}  \label{SS:assoc-form}
Let $V$ be a complex vector space of dimension $n$. Denote by $\cO(V)$ the symmetric algebra $\Sym(V^{\vee})$ on $V^{\vee}$ and by $\cO(V)_j:= \Sym^j(V^{\vee})$ the vector space of forms on $V$ of degree $j$. One has $\cO(V)=\oplus_{j=0}^{\infty} \cO(V)_j$. If we choose an isomorphism $V \cong \CC^n$, we may identify $\cO(V)$ with the algebra of polynomials $\CC[x_1,\dots,x_n]$ in $x_1, \ldots, x_n$ and $\cO(V)_j$ with the vector space $\CC[x_1, \ldots, x_n]_j$ of homogeneous polynomials of degree $j$. We assume throughout that $n \ge 2$.

Fix $d\ge 3$. Recall that the {\it discriminant}\, $\Delta$ is a form on $\oh(V)_d$ with the property that $\Delta(f) \neq 0$ if and only if the zero locus of $f$ has an isolated singularity at $0$.  Let $\cO(V)_{d, \Delta}$ be the affine open subvariety of $\cO(V)_d$ of forms with nonzero discriminant.  For $f \in \cO(V)_{d, \Delta}$, we define the Milnor algebra of the corresponding singularity to be
$$M(f) := \oh(V) / J(f), $$ 
where $J(f)$ is the Jacobian ideal, i.e., the ideal generated by all first-order partial derivatives of $f$. Choosing coordinates $x_1, \ldots, x_n$ in $V$, we may write the Milnor algebra as $M(f) = \CC[x_1,\dots,x_n]/(f_{x_{{}_1}}, \ldots, f_{x_{{}_n}})$. It is well-known that 
$M(f)$ is a standard graded local Artinian Gorenstein algebra whose socle $\Soc(M(f)) = M(f)_{n(d-2)}$ is generated in degree $n(d-2)$ by the image $\bar{\hess(f)} \in M(f)$ of the Hessian $\hess(f):= \det \Hess(f)$, where $\Hess(f)$ is the Hessian matrix $\big({\partial^2 f}/{\partial x_i \partial x_j} \big)_{i,j}$ (cf.~Lemma \ref{fourconds} below).

We will now introduce the {\it associated form}\, $A(f) \in \cO(V^{\vee})_{n(d-2)}$ of $f$. Upon identifying $\cO(V^{\vee})_{n(d-2)}$ with $\CC[y_1, \ldots, y_n]_{n(d-2)}$, where the coordinates $y_i$ are dual to $x_i$, we define $A(f)$ by the following formula:
$$(y_1 \bar{x}_1 + y_2 \bar{x}_2 + \cdots + y_n \bar{x}_n)^{n(d-2)} = A(f)(y_1, \ldots, y_n) \cdot \bar{\hess(f)} \in M(f),$$
with $\bar{x}_i \in M(f)$ being the image of the coordinate function $x_i$. It is not hard to see that the induced map
\begin{equation} \label{E:assoc}
A \co \cO(V)_{d, \Delta} \to \cO(V^{\vee})_{n(d-2)}, \quad f \mapsto A(f)
\end{equation}
is a morphism of affine varieties. 

\begin{example}  \label{E:example1} If $f = a_1 x_1^d + \cdots + a_n x_n^d$ for nonzero $a_i \in \CC$, then one computes $\hess(f) = (a_1 \cdots a_n)(d(d-1))^n (x_1 \cdots x_n)^{d-2}$ and
$$A(f)(y_1, \ldots, y_n) = \frac{1}{a_1 \cdots a_n} \frac{(n(d-2))!}{(d!)^n} (y_1 \cdots y_n)^{d-2}.$$
\end{example}

\smallskip

Next, there are natural actions of $\GL(V)$ on $\cO(V)_{d}$ and $\cO(V^{\vee})_{n(d-2)}$ as follows: if $g \in \GL(V)$, then  $(gf)(x): = f (x \cdot g^{-t})$ for $f \in \cO(V)_{d}$ and $x=(x_1, \ldots, x_n) \in V$, and $(gF)(y): = F(y \cdot g)$ for $F \in \cO(V^{\vee})_{n(d-2)}$ and $y = (y_1, \ldots, y_n) \in V^{\vee}$.  The affine open subvariety $\oh(V)_{d, \Delta}$ of $\oh(V)_d$ is clearly $\GL(V)$-invariant. The following lemma gives an equivariance property for the morphism $A$:

\begin{lemma}  
\label{L:equiv1} \it
For $f \in \cO(V)_{d, \Delta}$ and $g \in \GL(V)$, we have
\begin{equation} \label{E:equiv1}
A(g f) = (\det g)^2 \cdot g A(f).
\end{equation}
\end{lemma}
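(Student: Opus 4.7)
The proof is a direct calculation via change of variables; the entire content of the identity is bookkeeping on how the Milnor algebra, its socle generator, and the canonical sum $\sum_i y_i\bar x_i$ transform under $g\in\GL(V)$. To keep things unambiguous, I will write $h:=gf$, so that $h(x)=f(xG)$ with $G:=g^{-t}$, and will denote by $u_1,\dots,u_n$ the variables in which $f$ is written, so that $M(f)=\CC[u]/J(f)$ while $M(h)=\CC[x]/J(h)$.

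First, I would record the basic transformation rules. The substitution $\sigma\co\CC[u]\to\CC[x]$ defined by $u_i\mapsto (xG)_i=\sum_j G_{ji}x_j$ is an isomorphism of polynomial rings, and by the chain rule the ideals match: $\sigma(J(f))=J(h)$. Hence $\sigma$ descends to a $\CC$-algebra isomorphism $\bar\sigma\co M(f)\iso M(h)$. A second application of the chain rule gives the matrix identity $\Hess(h)(x)=G\cdot\Hess(f)(xG)\cdot G^{t}$, which upon taking determinants yields
$$\hess(h)(x)=(\det G)^{2}\,\hess(f)(xG)=(\det g)^{-2}\,\sigma(\hess(f)),$$
so that under $\bar\sigma$ the socle generator transforms by the scalar $(\det g)^{-2}$.

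Next I would push the defining identity for $A(f)$, namely
$$\Bigl(\sum_i y_i\bar u_i\Bigr)^{n(d-2)}=A(f)(y)\cdot\overline{\hess(f)}\in M(f),$$
through $\bar\sigma$. Since $\bar\sigma(\bar u_i)=\sum_j G_{ji}\bar x_j$, the inner sum becomes $\sum_j (Gy)_j\bar x_j$, while the right-hand side becomes $(\det g)^{2}A(f)(y)\cdot\overline{\hess(h)}$. Now substitute $y\mapsto G^{-1}y'=g^{t}y'$; comparing with the defining identity
$$\Bigl(\sum_j y'_j\bar x_j\Bigr)^{n(d-2)}=A(h)(y')\cdot\overline{\hess(h)}\in M(h)$$
and using the fact that $\overline{\hess(h)}$ generates a one-dimensional socle (so the scalar identity can be read off), gives $A(h)(y')=(\det g)^{2}A(f)(g^{t}y')$. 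Translating $g^{t}y'$ back into the row-vector convention of the paper yields $A(f)(g^{t}y')=A(f)(y'g)=(gA(f))(y')$, hence $A(gf)=(\det g)^{2}\cdot gA(f)$.

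The only step that requires genuine care is the correct placement of $g$ versus $g^{-t}$, which is governed by the two chain-rule computations above; everything else is formal manipulation. I do not anticipate any obstacle beyond keeping the left/right and transpose conventions consistent throughout.
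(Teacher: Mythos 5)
Your proof is correct and follows essentially the same route as the paper: both construct the isomorphism $M(f)\iso M(gf)$ induced by the linear change of variables, track how the element $\sum_i y_i\bar x_i$ and the socle generator $\overline{\hess(f)}$ transform (the latter via $g\,\hess(f)=(\det g)^2\hess(gf)$), and compare the two defining identities. The bookkeeping with $G=g^{-t}$ and the final substitution $y\mapsto g^ty'$ all check out against the paper's row-vector conventions.
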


\begin{proof}
There is an isomorphism of Milnor algebras $\varphi \co M(f) \iso M(gf)$ defined by $\bar{x_i} \mapsto \bar{g x_i}$. For $\bar{x} = (\bar{x}_1, \ldots, \bar{x}_n)$ and $y = (y_1, \ldots, y_n)$, we have by definition the equality $(y \cdot \bar{x}^t)^{n(d-2)} = A(f)(y) \cdot \bar{\hess(f)}$ in $M(f)$.  By noting that $\bar{gx} = \bar{x} \cdot g^{-t}$, we compute
\begin{equation} \label{E:equiv-left}
\varphi\big( (y \cdot \bar{x}^t)^{n(d-2)}\big) = \big(y \cdot ( \bar{gx})^t\big)^{n(d-2)} = \big(y \cdot g^{-1} \cdot \bar{x}^t \big)^{n(d-2)}.
\end{equation}
On the other hand, using the fact that $g \hess(f) = (\det g)^2 \cdot \hess(g f)$, we have
\begin{equation} \label{E:equiv-right}
\varphi(A(f)(y) \cdot \bar{\hess(f)}) = A(f)(y) \cdot \bar{g \hess(f)} = A(f)(y) \cdot (\det g)^2 \cdot \bar{\hess(gf)}.
\end{equation}
Comparing \eqref{E:equiv-left} and \eqref{E:equiv-right}, we obtain $A(gf)(y \cdot g^{-1}) = (\det g)^2 \cdot A(f)(y) $.
\end{proof}

\begin{remark}  Lemma \ref{L:equiv1} was established in \cite[Prop.~2.1]{ai} but due to its importance and the fact that we will generalize it in Lemma \ref{L:equiv2} below, we included the proof.  
\end{remark}

\subsection{The associated form of a finite morphism and a factorization of $A$}\label{assocformfin}  As before, let $V$ be a complex vector space of dimension $n \ge 2$ and let $d \ge 3$. We will now generalize the above construction from nondegenerate forms $f \co V \to \CC$ of degree $d$ to finite morphisms $\bbf = (f_1, \ldots, f_n) \co V \to V$ defined by $n$ forms of degree $d-1$. 

Let $\oh(V)_{d-1}^{\oplus n}$ be the vector space of $n$-tuples $\bbf = (f_1, \ldots, f_n)$ of forms of degree $d-1$.  Recall from \cite[Ch.~13]{GKZ} that the {\it resultant}\, $\Res$ on $\oh(V)_{d-1}^{\oplus n}$ is a form with the property that $\Res(\bbf) \neq 0$ if and only if $f_1, \ldots, f_n$ have no common zeroes away from $0$.   For $\bbf=(f_1, \ldots, f_n)$, introduce the algebra
$$
M(\bbf) := \oh(V) / (f_1, \ldots, f_n).
$$
Fixing coordinates in $V$, we have a well-known lemma:

\begin{lemma}\label{fourconds} \it
If\, $\bbf = (f_1, \ldots, f_n) \in (\oh(V)_{d-1})^{\oplus n}$, then the following are equivalent:
\begin{enumerate}
	\item[\rm (1)] the resultant $\Res(\bbf)$ is nonzero;
	\item[\rm (2)] the algebra $M(\bbf)$ has finite vector space dimension;
	\item[\rm (3)] the morphism $\bbf \co V \to V$ is finite;
	\item[\rm (4)] the $n$-tuple $\bbf$ is a homogeneous system of parameters of $\oh(V)$, i.e., the Krull dimension of $M(\bbf)$ is $0$.
\end{enumerate}
If the above conditions are satisfied, then $M(\bbf)$ is a standard graded local Artinian Gorenstein algebra whose socle $\Soc(M(\bbf)) = M(\bbf)_{n(d-2)}$ is generated in degree $n(d-2)$ by the image $\bar{\jac(\bbf)} \in M(\bbf)$ of the Jacobian $\jac(\bbf):= \det \Jac(\bbf)$, where $\Jac(\bbf)$ is the Jacobian matrix $\big({\partial f_i}/{\partial x_j} \big)_{i,j}$.\end{lemma}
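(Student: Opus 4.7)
The plan is to establish the four equivalences by standard commutative algebra arguments, then to deduce the Gorenstein structure of $M(\bbf)$ from the regular sequence property of a homogeneous system of parameters in a Cohen--Macaulay graded ring, and finally to identify the socle generator. By the defining property of the resultant from \cite{GKZ}, condition $(1)$ holds if and only if $f_1, \ldots, f_n$ have no common zero in $V \setminus \{0\}$, and by the homogeneous Nullstellensatz this is equivalent to $\sqrt{(f_1, \ldots, f_n)} = (x_1, \ldots, x_n)$, which is exactly $(4)$. Since $M(\bbf)$ is a finitely generated graded $\CC$-algebra, its finite-dimensionality as a vector space is equivalent to Krull dimension zero, giving $(2) \Leftrightarrow (4)$. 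For $(3) \Leftrightarrow (2)$, I would apply the graded Nakayama lemma to the graded ring map $\CC[y_1, \ldots, y_n] \to \oh(V)$, $y_i \mapsto f_i$: the ring $\oh(V)$ is a finite module over $\CC[y_1, \ldots, y_n]$ precisely when its reduction modulo the irrelevant ideal, namely $M(\bbf)$, is finite-dimensional over $\CC$.

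When the equivalent conditions hold, $f_1, \ldots, f_n$ is a homogeneous system of parameters of length $n$ in the Cohen--Macaulay graded ring $\oh(V)$ of Krull dimension $n$, so it is automatically a regular sequence. Hence $M(\bbf)$ is a graded complete intersection, which implies it is a standard graded local Artinian Gorenstein algebra with one-dimensional socle. A Hilbert-series computation for a complete intersection of type $(e_1, \ldots, e_n)$ places the socle in degree $\sum_i (e_i - 1)$, which in our case is $n(d-1) - n = n(d-2)$, matching the degree of $\jac(\bbf)$.

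The identification of $\bar{\jac(\bbf)}$ as a generator of the socle is the most substantive point, and I would invoke the classical theorem of Scheja--Storch on complete intersections. Equivalently, one can argue via the Euler--Jacobi residue formula: the Gorenstein duality $M(\bbf) \cong M(\bbf)^{\vee}$ is induced by the Hartshorne residue symbol, and the residue of $\jac(\bbf)\, dx_1 \wedge \cdots \wedge dx_n / (f_1 \cdots f_n)$ equals the positive integer $(d-1)^n$ (the global degree of the map $\bbf$), showing that $\bar{\jac(\bbf)} \neq 0$ in the one-dimensional socle and hence generates it. I expect this identification of the socle generator to be the main obstacle, as the equivalences $(1)$--$(4)$ are essentially formal consequences of standard Nullstellensatz and Nakayama arguments.
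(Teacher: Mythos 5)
Your proposal is correct and follows essentially the same route as the paper: standard Nullstellensatz/Nakayama arguments for the equivalence of (1)--(4), the complete-intersection (regular sequence) property to get the Artinian Gorenstein structure with socle in degree $n(d-2)$, and an appeal to a classical result for the socle generator. The only cosmetic difference is the citation for the last step --- the paper invokes Saito's lemma where you invoke Scheja--Storch or the Euler--Jacobi residue formula --- but these encode the same classical fact that the Jacobian has nonzero residue and hence generates the one-dimensional socle.
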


\begin{proof} The implication ${\rm (1)}\Rightarrow{\rm (2)}$ follows from the Nullstellensatz. Next, ${\rm (2)}$ yields that there exists a positive integer $N$ such that $\oh(V)_j$ lies in the ideal $(f_1,\dots,f_n)\subseteq\oh(V)$ for all $j>N$. Therefore, every $f\in \oh(V)$ can be written as a linear combination of monomials of degree not exceeding $N$ with coefficients that are polynomials in $f_1,\dots,f_n$. This means that the induced ring homomorphism $\bbf^*\co\oh(V)\to \oh(V)$ is finite, which establishes ${\rm (3)}$. Each of the implications ${\rm (3)} \Rightarrow{\rm (4)} \Rightarrow {\rm (1)}$ is straightforward. The conditions (1)--(4) imply that $M(\bbf)$ is a local complete intersection, hence a local Artinian Gorenstein algebra. The last statement of the lemma follows, e.g., from \cite[Lem.~3.4]{Sa}. \end{proof}

\begin{remark} \label{R:hilbert-function}
As we pointed out in Lemma \ref{fourconds}, the algebra $M(\bbf)$ has a natural standard grading: $M(\bbf) = \bigoplus_{i=0}^{\infty} M(\bbf)_i$.  It is well-known (cf.~\cite[Cor.~3.3]{St}) that the Hilbert function
$H(t):=\sum_{i=0}^{\infty}\dim{M}({\mathbf f})_i\,t^i$
of ${M}({\mathbf f})$ is given by
$ \label{E:hilbert-function}
H(t)=(t^{d-2}+\dots+t+1)^n.
$
We note that  $\dim M(\bbf)_{i} =  \dim M(\bbf)_{n(d-2)-i}$ for $0 \le i \le n(d-2)$ and that $\dim M(\bbf)_{i} = 0$ for $i > n(d-2)$.  In particular, as already stated in Lemma \ref{fourconds}, the socle $M(\bbf)_{n(d-2)}$ is one-dimensional.
\end{remark}

We let $( \oh(V)_{d-1}^{\oplus n})_{\Res}$ be the affine open subvariety of $\oh(V)_{d-1}^{\oplus n}$ consisting of $n$-tuples of forms with nonzero resultant. Upon choosing coordinates $x_1, \ldots, x_n$ in $V$ and the dual coordinates $y_1, \ldots, y_n$ in $V^{\vee}$, we may define the {\it associated form}\,\linebreak $\bA(\bbf) \in \cO(V^{\vee})_{n(d-2)}$ of $\bbf \in ( \oh(V)_{d-1}^{\oplus n})_{\Res}$ by the following formula:
$$(y_1 \bar{x}_1 + y_2 \bar{x}_2 + \cdots + y_n \bar{x}_n)^{n(d-2)} = \bA(\bbf)(y_1, \ldots, y_n) \cdot \bar{\jac(\bbf)} \in M(\bbf),$$
where $\bar{x}_i \in M(\bbf)$ is the image of $x_i$. It is again not hard to see that the induced map
\begin{equation} \label{E:assoc2}
\bA \co ( \oh(V)_{d-1}^{\oplus n})_{\Res} \to \cO(V^{\vee})_{n(d-2)}, \quad \bbf \mapsto \bA(\bbf)
\end{equation}
is a morphism of affine varieties.  

We point out that the construction of the associated form $\bA(\bbf)$ of an $n$-tuple\linebreak $\bbf \in ( \oh(V)_{d-1}^{\oplus n})_{\Res}$ generalizes that of the associated form $A(f)$ of a form\linebreak $f \in \oh(V)_{d, \Delta}$.  Namely, if we consider the gradient morphism 
$$\grad \co \oh(V)_{d} \to \oh(V)_{d-1}^{\oplus n}, \quad f \mapsto (f_{x_{{}_1}}, \ldots, f_{x_{{}_n}}),$$
then $\Delta(f) = 0$ if and only if $\Res(\grad(f)) = 0$ (the induced morphism $\oh(V)_{d, \Delta} \to (\oh(V)_{d-1}^{\oplus n})_{\Res}$ will also be referred to as $\grad$). Further, for any $f \in \oh(V)_{d, \Delta}$, we have $\hess(f) = \jac(\grad(f))$. Therefore, the following holds:
\begin{lemma} \label{L:factorization} \it The morphism $A$ factors as
\begin{equation}
A: \oh(V)_{d, \Delta} \xrightarrow{\grad} (\oh(V)_{d-1}^{\oplus n})_{\Res} \xrightarrow{\bA} \cO(V^{\vee})_{n(d-2)}.
\label{factoriz1}
\end{equation}
\end{lemma}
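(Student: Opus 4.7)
The plan is to verify, directly from the definitions, that the two natural constructions coincide: the associated form $A(f)$ of a nondegenerate form and the associated form $\bA(\bbf)$ of an $n$-tuple of forms of degree $d-1$ agree when the tuple is taken to be $\grad(f)$. The lemma is thus essentially a bookkeeping check, and no real obstacle is expected; the substance has already been absorbed into the two facts highlighted in the paragraph preceding the statement.

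First I would check that the restricted morphism $\grad \co \oh(V)_{d,\Delta} \to (\oh(V)_{d-1}^{\oplus n})_{\Res}$ is well-defined as a map between the indicated affine open subvarieties. This is exactly the content of the first highlighted fact, namely $\Delta(f) = 0$ if and only if $\Res(\grad(f)) = 0$, so $\grad$ sends discriminant-nonvanishing forms to resultant-nonvanishing $n$-tuples.

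Next I would observe that when $\bbf = \grad(f) = (f_{x_1}, \ldots, f_{x_n})$, the Jacobian ideal coincides with the ideal generated by $\bbf$, and hence $M(\bbf) = \oh(V)/(f_{x_1}, \ldots, f_{x_n}) = M(f)$ as graded $\oh(V)$-algebras. In particular the residues $\bar{x}_i \in M(f)$ and $\bar{x}_i \in M(\bbf)$ are literally the same elements. Moreover, by the second highlighted fact, $\hess(f) = \det\Hess(f) = \det\Jac(\grad(f)) = \jac(\grad(f))$, so the socle generators $\bar{\hess(f)}$ and $\bar{\jac(\grad(f))}$ used in the two defining formulas agree in this common algebra.

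Putting these together, the defining relation
$$(y_1 \bar{x}_1 + \cdots + y_n \bar{x}_n)^{n(d-2)} = A(f)(y_1,\ldots,y_n) \cdot \bar{\hess(f)} \in M(f)$$
is identical, symbol for symbol, to the defining relation for $\bA(\grad(f))$ in $M(\grad(f))$. Since the socle $M(f)_{n(d-2)}$ is one-dimensional (Lemma \ref{fourconds}), the coefficient $A(f)(y_1,\ldots,y_n)$ is uniquely determined, and we conclude $A(f) = \bA(\grad(f))$ for every $f \in \oh(V)_{d,\Delta}$. This yields the factorization \eqref{factoriz1}.
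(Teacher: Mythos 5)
Your proposal is correct and follows exactly the paper's reasoning: the paper derives the lemma directly from the two facts stated in the preceding paragraph ($\Delta(f)=0$ iff $\Res(\grad(f))=0$, and $\hess(f)=\jac(\grad(f))$), so that the defining relations for $A(f)$ and $\bA(\grad(f))$ coincide in $M(f)=M(\grad(f))$. Your additional remark that the one-dimensionality of the socle pins down the coefficient uniquely is a sound way to make the identification explicit.
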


The vector space $ \oh(V)_{d-1}^{\oplus n}$ has a natural action of $\GL(V) \times \GL_n$ via
\begin{equation}
((g_1, g_2)  {\mathbf f}) (x) := {\mathbf f} (x \cdot g_1^{-t}) \cdot g_2^{-1}\label{doubleaction}
\end{equation}
for $g_1\in\GL(V)$, $g_2 \in \GL_n$ and ${\mathbf f}=(f_1,\dots,f_n) \in  \oh(V)_{d-1}^{\oplus n}$.
Clearly, the open affine subvariety $(\oh(V)_{d-1}^{\oplus n})_{\Res}$ is $\GL(V) \times \GL_n$-invariant.

\begin{lemma}\label{L:equiv2} \it For every ${\mathbf f}\in (\oh(V)_{d-1}^{\oplus n})_{\Res}$, $g_1\in\GL(V)$, $g_2 \in \GL_n$, one has
\begin{equation} \label{E:equiv2}
\displaystyle{\mathbf A}((g_1,g_2) \bbf)=\det(g_1 g_2) \cdot g_1 {\mathbf A}({\mathbf f}).
\end{equation}
\end{lemma}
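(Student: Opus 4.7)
The plan is to adapt the argument of Lemma \ref{L:equiv1} so that the two factors $g_1$ and $g_2$ are tracked separately. First, I would observe that the ideal $((g_1,g_2)\bbf)\subseteq\oh(V)$ coincides with the ideal generated by the components of $\bbf(x\cdot g_1^{-t})$, since right multiplication by $g_2^{-1}\in\GL_n$ is only an invertible linear change of generators and therefore does not alter the ideal. Consequently, the algebra endomorphism $\varphi\co\oh(V)\to\oh(V)$, $p(x)\mapsto p(xg_1^{-t})$, carries $(f_1,\dots,f_n)$ into $((g_1,g_2)\bbf)$ and descends to a graded algebra isomorphism $\bar\varphi\co M(\bbf)\iso M((g_1,g_2)\bbf)$.

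Next I would apply $\bar\varphi$ to the defining identity $(y\cdot\bar{x}^{\,t})^{n(d-2)} = \bA(\bbf)(y)\cdot\bar{\jac(\bbf)}$ in $M(\bbf)$, where $y=(y_1,\dots,y_n)$. On the left-hand side, since $\varphi(x_i)(x) = (xg_1^{-t})_i$, writing $\bar{x}'=(\bar x_1',\dots,\bar x_n')$ for the images of $x_1,\dots,x_n$ in $M((g_1,g_2)\bbf)$ yields $\bar\varphi(\bar{x}) = \bar{x}'\cdot g_1^{-t}$, so the left-hand side becomes $\bigl((yg_1^{-1})\cdot(\bar{x}')^{t}\bigr)^{n(d-2)}$. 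On the right-hand side, the chain and product rules give
\[
\Jac((g_1,g_2)\bbf)(x) = g_2^{-t}\,\Jac(\bbf)(xg_1^{-t})\,g_1^{-1},
\]
and taking determinants produces $\jac((g_1,g_2)\bbf)(x) = \det(g_1g_2)^{-1}\,\jac(\bbf)(xg_1^{-t}) = \det(g_1g_2)^{-1}\,\varphi(\jac(\bbf))$, so that $\bar\varphi(\bar{\jac(\bbf)}) = \det(g_1g_2)\cdot\bar{\jac((g_1,g_2)\bbf)}$ in $M((g_1,g_2)\bbf)$.

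Combining the two computations, I arrive at the equality $\bigl((yg_1^{-1})\cdot(\bar{x}')^{t}\bigr)^{n(d-2)} = \det(g_1g_2)\cdot\bA(\bbf)(y)\cdot\bar{\jac((g_1,g_2)\bbf)}$. Comparing it with the definition of $\bA((g_1,g_2)\bbf)$ evaluated at the vector $yg_1^{-1}$, and using the one-dimensionality of the socle recorded in Lemma \ref{fourconds} to cancel the Jacobian generator, I obtain $\bA((g_1,g_2)\bbf)(yg_1^{-1}) = \det(g_1g_2)\cdot\bA(\bbf)(y)$. Substituting $yg_1$ for $y$ and recognising that $\bA(\bbf)(yg_1) = (g_1\bA(\bbf))(y)$ by the definition of the $\GL(V)$-action on $\oh(V^{\vee})_{n(d-2)}$ yields the required identity \eqref{E:equiv2}. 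The whole argument is in essence a bookkeeping extension of Lemma \ref{L:equiv1}; the only place demanding genuine care is the chain/product rule computation for $\Jac((g_1,g_2)\bbf)$, which is what produces the factor $\det(g_1g_2)$ and explains why the $\GL_n$-twist contributes the extra $\det g_2$ on top of the $\det g_1$ already present in the $\GL(V)$-twist.
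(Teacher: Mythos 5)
Your proposal is correct and follows essentially the same route as the paper: transporting the defining identity of $\bA(\bbf)$ through the algebra isomorphism induced by the coordinate change and tracking how the Jacobian generator of the socle transforms. The only organizational difference is that the paper factors the action as $(g_1,g_2)=(g_1,\id)\cdot(\id,g_2)$ and treats the two pieces separately (the first by citing the argument of Lemma~\ref{L:equiv1}, the second by noting the ideal is unchanged while $\jac$ picks up $(\det g_2)^{-1}$), whereas you carry out the combined chain-rule computation in one step.
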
 

\begin{proof}
The argument of Lemma \ref{L:equiv1} implies $\bA((g_1,\id) \bbf)=(\det g_1) \cdot g_1  \bA(\bbf)$.  On the other hand, since the ideal generated by the forms $f_1, \ldots, f_n$ is equal to the ideal generated by the $n$ forms of $\bbf \cdot g_2^{-1}$ and since $\jac(\bbf \cdot g_2^{-1}) = (\det g_2)^{-1} \cdot\jac(\bbf)$, we see that $\bA((\id,g_2) \bbf)=(\det g_2) \cdot  \bA(\bbf)$.
\end{proof}

\begin{remark} The gradient morphism
$\grad \co \oh(V)_d \to \oh(V)_{d-1}^{\oplus n}$
is equivariant with respect to the diagonal inclusion $\GL(V) \hookrightarrow \GL(V) \times \GL_n$, $g \mapsto (g, g)$, that is, $\grad(g  f) = (g, g)  \grad(f)$ for $g \in \GL(V)$ and $f \in \oh(V)_d$ (recall here that we work with the fixed coordinates $x_1,\dots,x_n$ in $V$ hence with a fixed isomorphism $\GL(V)\cong \GL_n$). Therefore, equivariance formula \eqref{E:equiv2} induces equivariance formula \eqref{E:equiv1} via the gradient morphism, that is, factorization (\ref{factoriz1}) is equivariant with respect to the given actions.
\end{remark}

\subsection{Projectivization of the morphisms $A$ and $\bA$} 
The constructions of Sections \ref{SS:assoc-form} and \ref{assocformfin} can be projectivized. Let $\PP(\cO(V)_d)$ be the projectivization of $\cO(V)_d$ and $\PP(\cO(V)_d)_{\Delta}$ the affine open subvariety where the discriminant $\Delta$ does not vanish. Then morphism \eqref{E:assoc} gives rise to a morphism
$$A \co \PP(\cO(V)_d)_{\Delta} \to \PP(\cO(V^{\vee})_{n(d-2)}),$$
which is $\SL(V)$-equivariant.
{\it We abuse notation by using the same symbol $A$ to denote morphism \eqref{E:assoc} as well as the induced projectivized morphism.}

Next, denote by $\Gr(n, \cO(V)_{d-1})$ the Grassmannian of $n$-dimensional subspaces of $\cO(V)_{d-1}$. Clearly, $\Gr(n, \cO(V)_{d-1})$ is a $\GL_n$-quotient of the open subvariety of $\oh(V)_{d-1}^{\oplus n}$ consisting of $n$-tuples $\bbf = (f_1, \ldots, f_n)$ where $f_1, \ldots, f_n$ are linearly independent.  The resultant $\Res$ on $\oh(V)_{d-1}^{\oplus n}$ descends to a section, also denoted by $\Res$, of a power of the very ample generator of the Picard group of $\Gr(n, \oh(V)_{d-1})$.  Let $\Gr(n, \cO(V)_{d-1})_{\Res}$ be the affine open subvariety where $\Res$ does not vanish and consider the $\GL(V)$-equivariant morphism 
$$
( \oh(V)_{d-1}^{\oplus n})_{\Res} \to \Gr(n, \cO(V)_{d-1})_{\Res}, \quad  \bbf = (f_1, \ldots, f_n) \mapsto \langle f_1, \ldots, f_n \rangle,
$$
which is a $\GL_n$-torsor. Then, by equivariance property (\ref{E:equiv2}), the morphism $\bA$ from \eqref{E:assoc2} composed with the projection $\cO(V^{\vee})_{n(d-2)}\setminus\{0\}\to\PP(\cO(V^{\vee})_{n(d-2)})$ factors as
$$
( \oh(V)_{d-1}^{\oplus n})_{\Res} \to \Gr(n, \cO(V)_{d-1})_{\Res} \xrightarrow{\bA} \PP(\cO(V^{\vee})_{n(d-2)}).
$$
{\it Note that we abuse notation by using the symbol $\bA$ to refer to both the morphism 
$( \oh(V)_{d-1}^{\oplus n})_{\Res} \to \cO(V^{\vee})_{n(d-2)}$ and $\Gr(n, \cO(V)_{d-1})_{\Res} \to \PP(\cO(V^{\vee})_{n(d-2)})$.  In each occurrence of the symbols $A$ and $\bA$, it will be clear which morphism we refer to.}

Further, the gradient morphism descends to an $\SL(V)$-equivariant morphism
$$\begin{aligned}
\nabla \co \PP(\oh(V)_{d}) \setminus \{ f \, \mid \, f_{x_{{}_1}}, \ldots, f_{x_{{}_n}} \text{are linearly dependent} \} & \to  \Gr(n, \oh(V)_{d-1}),  \\
f &\mapsto \langle f_{x_{{}_1}}, \ldots, f_{x_{{}_n}} \rangle,
\end{aligned}$$
and we obtain a factorization analogous to (\ref{factoriz1}):
\begin{equation} \label{E:factorization-proj}
A: \PP(\oh(V)_d)_{\Delta} \xrightarrow{\nabla} \Gr(n, \oh(V)_{d-1})_{\Res} \xrightarrow{\bA} \PP(\cO(V^{\vee})_{n(d-2)}).
\end{equation}
From (\ref{E:equiv1}) and (\ref{E:equiv2}), it is clear that factorization (\ref{E:factorization-proj}) is $\SL(V)$-equivariant with respect to the natural actions.

\subsection{The polar pairing and Macaulay inverse systems}

Let $x_1, \ldots, x_n$ be coordinates in $V$ and $y_1, \ldots, y_n$ the dual coordinates in $V^{\vee}$. Recall that the algebra $\cO(V^{\vee})$ is an $\cO(V)$-module via differentiation:
$$(h \circ F) (y_1, \ldots, y_n) := h\left(\frac{\partial}{\partial y_1}, \ldots, \frac{\partial}{\partial y_n}\right)F(y_1, \ldots, y_n),$$
where $h \in \oh(V)$ and $F \in \oh(V^{\vee})$.  For a positive integer $j$, differentiation induces a perfect pairing
$$ \cO(V)_j \times \cO(V^{\vee})_j \to \CC, \quad (h, F) \mapsto h \circ F,$$
which is independent of the choice of coordinates and is often referred to as the {\it polar pairing}.  For $F \in \cO(V^{\vee})_j$, we now introduce the homogenous ideal
$$F^{\perp} := \{h\in \oh(V)\, \mid \, h \circ F = 0 \} \subseteq \oh(V),$$
which is clearly independent of scaling and thus is well-defined for $F \in \PP(\cO(V^{\vee})_j)$.
It is well-known that the quotient ring $\oh(V) / F^{\perp}$ is a local standard graded Artinian Gorenstein algebra of socle degree $j$. In particular, $\dim (\oh(V) / F^{\perp} )_{j} = 1$, $\dim (\oh(V) / F^{\perp} )_{i} = 0$ for $i > j$, and the symmetry relation $\dim (\oh(V) / F^{\perp} )_{i} = \dim (\oh(V) / F^{\perp} )_{j-i}$ holds for all $0\le i\le j$.

We recall the following well-known proposition (cf.~\cite[Lem.~2.12]{IK}):

\begin{proposition} \label{prop-correspondence}
The correspondence $F \mapsto \oh(V)/F^{\perp}$ induces a bijection
$$
\PP(\cO(V^{\vee})_j)  \to
\left\{ 
 	\begin{array}{l} 
		\text{local Artinian Gorenstein quotient algebras $\oh(V)/I$}\\ 
		\text{of socle degree $j$, where the ideal $I$ is homogeneous}\\
		 \end{array} \right\}.
$$
\end{proposition}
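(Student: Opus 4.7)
The plan is to prove the bijection by separately verifying injectivity and surjectivity of the correspondence, using the perfect polar pairing in each graded degree as the main tool. The map is well-defined on $\PP(\oh(V^{\vee})_j)$ because $F^\perp$ is unchanged by rescaling $F$, and the discussion immediately preceding the statement already shows that $\oh(V)/F^\perp$ lies in the target set. For injectivity, I would observe that $F^\perp_j \subseteq \oh(V)_j$ is precisely the annihilator of the line $\CC \cdot F \subseteq \oh(V^{\vee})_j$ under the polar pairing; since that pairing is perfect, the line $\CC \cdot F$ is recovered as the annihilator of $F^\perp_j$ in $\oh(V^{\vee})_j$, and so $F^\perp = G^\perp$ forces $\CC \cdot F = \CC \cdot G$.

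For surjectivity, given a homogeneous ideal $I \subseteq \oh(V)$ with $M := \oh(V)/I$ local Artinian Gorenstein of socle degree $j$, I would build $F$ from $I$ as follows. Since $\dim M_j = 1$, the subspace $I_j$ has codimension $1$ in $\oh(V)_j$, so by perfectness of the polar pairing its annihilator in $\oh(V^{\vee})_j$ is a line, and I would let $F$ be any nonzero generator of that line. Then $F^\perp_j = I_j$ by construction, and it remains to check $F^\perp_i = I_i$ for $0 \le i < j$ (for $i > j$, both sides are all of $\oh(V)_i$, since $I$ is contained in every ideal of socle degree at most $j$ above it and $F$ has degree $j$).

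The verification proceeds by a duality sandwich. On the ideal side, the graded Gorenstein property of $M$ yields a perfect multiplication pairing $M_i \times M_{j-i} \to M_j \cong \CC$, so for $h \in \oh(V)_i$ one has $h \in I_i$ if and only if $gh \in I_j$ for every $g \in \oh(V)_{j-i}$. On the differentiation side, perfectness of the polar pairing in degree $j-i$ gives $h \circ F = 0$ if and only if $g \circ (h \circ F) = 0$ for every such $g$, and $g \circ (h \circ F) = (gh) \circ F$ since the partial derivative operators commute. Combining, $h \in F^\perp_i$ iff $gh \in F^\perp_j = I_j$ for every $g$, iff $h \in I_i$, so $F^\perp = I$ as desired. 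The only nontrivial input is the perfect multiplication pairing on a graded Artinian Gorenstein algebra, which is a standard reformulation of the one-dimensional socle condition (and is built into \cite[Lem.~2.12]{IK}); I expect this to be the sole obstacle, after which the argument is a clean interplay between the polar pairing and the Gorenstein multiplication pairing.
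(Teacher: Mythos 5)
Your argument is correct: the paper itself gives no proof of this proposition, simply citing \cite[Lem.~2.12]{IK}, and your injectivity-plus-surjectivity argument via the perfect polar pairing in each degree together with the perfect multiplication pairing $M_i\times M_{j-i}\to M_j$ on a graded Artinian Gorenstein algebra is exactly the standard Macaulay duality proof behind that reference. The only blemish is the garbled parenthetical justifying $I_i=\oh(V)_i$ for $i>j$ --- the correct reason is simply that $M_i=0$ above the socle degree --- but this does not affect the argument.
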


\begin{remark}\label{invsysrem}
\noindent Given a homogenous ideal $I \subseteq \oh(V)$ such that $\oh(V)/I$ is an Artinian Gorenstein algebra of socle degree $j$, Proposition \ref{prop-correspondence} implies that there is a  form $F \in  \cO(V^{\vee})_j$, unique up to scaling, such that $I = F^{\perp}$. In fact, the uniqueness part of this statement can be strengthened: if $I\subseteq F^{\perp}$, then $I = F^{\perp}$ and all forms with this property are mutually proportional. Indeed, $I\subseteq F^{\perp}$ implies $I_j\subseteq F^{\perp}$, and the claim follows from the fact that $I_j$ has codimension 1 in $\oh(V)_j$. Any such form $F$ is called {\it a {\rm (}homogeneous{\rm )} Macaulay inverse system for $\oh(V)/I$}, and its image in $\PP(\cO(V^{\vee})_j)$ is called {\it the {\rm (}homogeneous{\rm )} Macaulay inverse system for $\oh(V)/I$}.
\end{remark}

The following proposition gives an alternative interpretation of the associated form in terms of Macaulay inverse systems.  This proposition was established in \cite[Prop.~3.2]{ai} in greater generality but we include a direct proof below in the case at hand for the reader's convenience.

\begin{prop} \label{P:inverse-system} \it
If $\bbf = (f_1, \ldots, f_n) \in (\oh(V)_{d-1}^{\oplus n})_{\Res}$, 
then $\bA(\bbf) \in \oh(V^{\vee})_{n(d-2)}$ is a Macaulay inverse system for the algebra $M(\bbf)$.
In particular, if $f \in \cO(V)_{d,\Delta}$, then 
$A(f)  \in \oh(V^{\vee})_{n(d-2)}$ is a Macaulay inverse system for the Milnor algebra $M(f)$.
\end{prop}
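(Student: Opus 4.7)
The plan is to show that $(f_1, \ldots, f_n)$ and $\bA(\bbf)^{\perp}$ coincide as ideals of $\oh(V)$. By Lemma \ref{fourconds}, the quotient $M(\bbf) = \oh(V)/(f_1, \ldots, f_n)$ is a standard graded Artinian Gorenstein algebra of socle degree $n(d-2) = \deg \bA(\bbf)$, so by Remark \ref{invsysrem} it suffices to establish the single inclusion $(f_1, \ldots, f_n) \subseteq \bA(\bbf)^{\perp}$. Equivalently, I will prove that $h\circ \bA(\bbf) = 0$ for every $h \in (f_1,\ldots,f_n)$.

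The main step is to differentiate the defining relation
\[
(y_1\bar{x}_1+\cdots+y_n\bar{x}_n)^{N} \;=\; \bA(\bbf)(y_1,\dots,y_n)\cdot \bar{\jac(\bbf)} \qquad \text{in } M(\bbf)[y_1, \ldots, y_n],
\]
where $N = n(d-2)$. A routine induction on $|\alpha|$ starting from the identity $\partial_{y_i}(y\cdot \bar{x}^{\,t})^M = M\bar{x}_i(y\cdot \bar{x}^{\,t})^{M-1}$ yields
\[
h(\partial_y)\bigl((y\cdot \bar{x}^{\,t})^{N}\bigr) \;=\; \frac{N!}{(N-j)!}\,\bar{h}\cdot(y\cdot\bar{x}^{\,t})^{N-j}
\]
for any $h \in \oh(V)_j$ with $j \le N$, where $\bar h \in M(\bbf)$ denotes the image of $h$. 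Applying $h(\partial_y)$ to both sides of the defining relation, I obtain
\[
\frac{N!}{(N-j)!}\,\bar{h}\cdot(y\cdot\bar{x}^{\,t})^{N-j} \;=\; \bigl(h\circ \bA(\bbf)\bigr)(y)\cdot \bar{\jac(\bbf)}.
\]
If $h \in (f_1,\ldots,f_n)$ then $\bar h = 0$, so the left-hand side vanishes; since $\bar{\jac(\bbf)}$ is a nonzero generator of the one-dimensional socle $M(\bbf)_{N}$ and the coefficients of $(h\circ \bA(\bbf))(y)$ are scalars, comparing $y$-coefficients forces $h\circ \bA(\bbf) = 0$, as desired.

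The second statement about $A(f)$ will then be an immediate consequence: taking $\bbf = \grad(f)$ identifies $M(f)$ with $M(\grad(f))$, and by Lemma \ref{L:factorization} we have $A(f) = \bA(\grad(f))$, so $A(f)$ serves as a Macaulay inverse system for $M(f)$. I do not anticipate any real obstacle; the argument rests entirely on the elementary differentiation identity above together with the Gorenstein-based uniqueness of inverse systems recorded in Remark \ref{invsysrem}. The only point that might warrant a remark is that the differentiation is performed in the independent variables $y_i$, so it commutes freely with passage to the quotient $M(\bbf)$ in the $x$-variables.
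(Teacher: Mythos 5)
Your proof is correct and rests on exactly the same key computation as the paper's: differentiating the defining relation $(y\cdot\bar{x}^{\,t})^{n(d-2)}=\bA(\bbf)(y)\cdot\bar{\jac(\bbf)}$ to obtain $\bigl(h\circ\bA(\bbf)\bigr)\cdot\bar{\jac(\bbf)}=\frac{(n(d-2))!}{(n(d-2)-j)!}(y\cdot\bar{x}^{\,t})^{n(d-2)-j}\,h(\bar{x})$. The only (harmless) difference is that you prove the single inclusion $(f_1,\ldots,f_n)\subseteq\bA(\bbf)^{\perp}$ and then invoke Remark \ref{invsysrem} (noting, as you should perhaps make explicit, that $\bA(\bbf)\neq 0$ because the socle $M(\bbf)_{n(d-2)}$ is nonzero), whereas the paper reads off both inclusions directly from the same identity.
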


\begin{proof} For $\bar{x} = (\bar{x}_1, \ldots, \bar{x}_n)$ and $y = (y_1, \ldots, y_n)$ 
we have by definition the equality $(y \cdot \bar{x}^t)^{n(d-2)} = \bA(\bbf)(y) \cdot \bar{\jac(\bbf)}$ in $M(\bbf)$.  If $j_1, \ldots, j_n$ are nonnegative integers with sum $j$, then a simple calculation yields
$$\left(\frac{\partial^j}{\partial y_1^{j_1} \cdots \partial y_n^{j_n}} \bA(\bbf) \right) \bar{\jac(\bbf)}=  \frac{ (n(d-2))!}{(n(d-2) -j)!}  (y \cdot \bar{x}^t)^{n(d-2) - j} \, \bar{x}_1^{j_1} \cdots \bar{x}_n^{j_n}.$$
Therefore, for any $h \in {\mathcal O}(V)_j$ we have 
$$\big( h \circ  \bA(\bbf) \big) \bar{\jac(\bbf)} = \frac{ (n(d-2))!}{(n(d-2) - j)!} (y \cdot \bar{x}^t)^{n(d-2) - j} \, h(\bar{x}_1, \ldots, \bar{x}_n),$$
which is zero if and only if $h(\bar{x}_1, \ldots, \bar{x}_n) = 0$, or equivalently, if and only if $h$ lies in the ideal $(f_1, \ldots, f_n)$.  We have thus established that
$(f_1, \ldots, f_n) = \bA(\bbf)^{\perp}$.
\end{proof}

\begin{remark}
The above proposition allows us to interpret the morphism\linebreak $A \co \PP(\oh(V)_d)_{\Delta} \to \PP(\oh(V^{\vee})_{n(d-2)})$ as taking a form $f$ to the Macaulay inverse system for the Milnor algebra $M(f)$.  Similarly, the morphism $\bA \co \Gr(n, \oh(V)_{d-1})_{\Res} \to  \PP(\oh(V^{\vee})_{n(d-2)})$ takes a subspace $\langle f_1, \ldots, f_n \rangle \in \Gr(n, \oh(V)_{d-1})_{\Res}$ to the Macaulay inverse system for the algebra ${\mathcal O}(V) / (f_1, \ldots, f_n)$.
\end{remark}

\subsection{The image of ${\mathbf A}$}\label{sectimage} Consider the locally closed subvariety $U_{\Res} \subseteq \PP(\oh(V^{\vee})_{n(d-2)})$ of forms $F$ such that $\dim F^{\perp} \cap \oh(V)_{d-1} = n$ and the subspace $F^{\perp} \cap \oh(V)_{d-1}$ lies in $\Gr(n,\oh(V)_{d-1})_{\Res}$. By Proposition \ref{P:inverse-system}, the image of $\bA$ is contained in $U_{\Res}$.

\begin{prop} \label{P:A-immersion} \it The morphism $\bA:\Gr(n, \oh(V)_{d-1})_{\Res}\to U_{\Res}$ is an isomorphism.
\end{prop}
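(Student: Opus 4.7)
The plan is to construct an explicit two-sided inverse morphism $B \co U_{\Res} \to \Gr(n, \oh(V)_{d-1})_{\Res}$ and then verify $B \circ \bA = \id$ and $\bA \circ B = \id$ using the Macaulay inverse system picture already established. The candidate inverse is the set map $F \mapsto F^{\perp} \cap \oh(V)_{d-1}$, which is well defined on $U_{\Res}$ by the very definition of this locally closed subvariety.

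The first step is to verify that $B$ is a morphism of algebraic varieties. For $F \in \PP(\oh(V^{\vee})_{n(d-2)})$, the polar pairing yields a linear map $\phi_F \co \oh(V)_{d-1} \to \oh(V^{\vee})_{n(d-2)-(d-1)}$, $h \mapsto h \circ F$, whose matrix entries are linear in the coefficients of $F$ and whose kernel is $F^{\perp} \cap \oh(V)_{d-1}$. On the locally closed locus where $\rank \phi_F = \dim \oh(V)_{d-1} - n$, the assignment $F \mapsto \ker \phi_F$ is a morphism to $\Gr(n,\oh(V)_{d-1})$ (this is a standard rank-stratification construction on Grassmannians), and intersecting with the open condition $\Res \neq 0$ shows that $B$ is a morphism on $U_{\Res}$.

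The second step checks $B \circ \bA = \id$. Given $W = \langle f_1, \ldots, f_n \rangle \in \Gr(n, \oh(V)_{d-1})_{\Res}$, Proposition \ref{P:inverse-system} gives $\bA(W)^{\perp} = (f_1, \ldots, f_n)$ as a homogeneous ideal of $\oh(V)$. The degree $d-1$ component of this ideal is exactly the $\CC$-linear span $W$, since any relation $\sum g_i f_i$ of degree $d-1$ with $f_i$ of degree $d-1$ forces each $g_i$ to be a constant. Thus $B(\bA(W)) = \bA(W)^{\perp} \cap \oh(V)_{d-1} = W$.

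The third and final step checks $\bA \circ B = \id$, which is where the Gorenstein uniqueness of Remark \ref{invsysrem} does the work. Fix $F \in U_{\Res}$, set $W = B(F)$, and let $I = (W) \subseteq \oh(V)$ be the ideal it generates. Since $W \subseteq F^{\perp}$ and $F^{\perp}$ is an ideal, $I \subseteq F^{\perp}$. By Lemma \ref{fourconds}, the quotient $\oh(V)/I = M(\bbf)$ is a standard graded local Artinian Gorenstein algebra of socle degree $n(d-2)$, which by the strengthened uniqueness in Remark \ref{invsysrem} forces $I = F^{\perp}$. Combining this with Proposition \ref{P:inverse-system}, which gives $\bA(W)^{\perp} = I$, and with Proposition \ref{prop-correspondence}, we conclude $\bA(W) = F$ in $\PP(\oh(V^{\vee})_{n(d-2)})$, completing the proof. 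The only nontrivial step is the morphism verification in the first paragraph; the bijectivity on points is essentially a repackaging of the Macaulay correspondence already assembled in the previous subsection.
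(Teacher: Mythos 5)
Your proposal is correct and takes essentially the same route as the paper: both use the candidate inverse $F \mapsto F^{\perp} \cap \oh(V)_{d-1}$, verify it is a left inverse via Proposition \ref{P:inverse-system}, and obtain surjectivity from the strengthened uniqueness of Macaulay inverse systems in Remark \ref{invsysrem}. The only difference is cosmetic: you explicitly check that this inverse is a morphism (via the rank stratification of the polar pairing), whereas the paper takes this for granted and instead deduces that $\bA$ is a closed immersion from the separatedness of $\bB$.
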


\begin{proof}
The morphism $\bB \co U_{\Res} \to \Gr(n, \oh(V)_{d-1})_{\Res}$ given by $F \mapsto F^{\perp} \cap \oh(V)_{d-1}$ yields the diagram
$$\xymatrix{
\hspace{-2cm}\Gr(n, \oh(V)_{d-1})_{\Res} \ar[r]^{\hspace{1cm} \bA} \ar[rd]^{{\mathrm {id}}}		&	U_{\Res} \ar[d]^{\bB} \\
											& \Gr(n, \oh(V)_{d-1})_{\Res},
}$$		
which is commutative by Proposition \ref{P:inverse-system}. As $\bB$ is separated, it follows immediately that $\bA$ is a closed immersion. Next, if $F \in U_{\Res}$, then for the ideal $I\subseteq{\mathcal O}(V)$ generated by $F^{\perp} \cap \oh(V)_{d-1}$, we have the inclusion $I \subseteq F^{\perp}$. By Remark \ref{invsysrem}, the form $F$ is the inverse system for ${\mathcal O}(V)/I$, and therefore $F = \bA( F^{\perp} \cap \oh(V)_{d-1})$.
\end{proof}

\section{Interpretation of Conjecture \ref{C:main} via GIT quotients}\label{GITinterpr}
\setcounter{equation}{0}

In this section, we interpret Conjecture \ref{C:main} in terms of properties of the GIT quotients of the morphisms $A$ and $\bA$.

\subsection{Review of Geometric Invariant Theory}  \label{S:GIT} We quickly review some notions from geometric invariant theory (GIT); see \cite{git} for more details.

Let $G$ be a reductive algebraic group over $\CC$. If $W$ is a $G$-representation, then the  {\it GIT quotient of $W$ by $G$}\, is the morphism $\pi \co W \to W \gitq G := \Spec \oh(W)^G$.  
The quotient $W \gitq G$ has desirable properties---in particular, it parametrizes closed $G$-orbits in $W$.  There is an induced action of $G$ on the projective space $\PP(W)$, and we define the loci of {\it semistable, polystable}\, and {\it stable}\, points in $\PP(W)$, respectively, by:
$$
\begin{array}{ll}
\PP(W)^{\ss} & \hspace{-0.3cm}:=\left\{w \in \PP(W) \, \middle| \begin{array}{l} \hbox{there exists $I \in \oh(W)_j^G$ for $j > 0$ such that $I(\hat{w}) \neq 0$} \\ \hbox{where $\hat{w} \in W$ is any lift of $w$.} \end{array} \right\},\\
\vspace{-0.1cm}\\
\PP(W)^{\ps}& \hspace{-0.3cm}:=\{w\in \PP(W)^{\ss}\mid \hbox{$G w$ is closed in $\PP(W)^{\ss}$}\},\\
\vspace{-0.1cm}\\
\PP(W)^{\s}& \hspace{-0.3cm}:= \{w\in \PP(W)^{\ss}\mid \hbox{$G w$ is closed in $\PP(W)^{\ss}$ and $\dim G w=\dim G$}\}.
\end{array}
$$
It is a fact that $w \notin \PP(W)^{\ss}$ if and only $0 \in \bar{ G  \hat{w}}$ for a lift $\hat{w} \in W$ of $w$.  It follows that the affine cone over the complement $\PP(W) \setminus \PP(W)^{\ss}$ is simply $\pi^{-1}(\pi(0))$.   

More generally, if $X \subseteq \PP(W)$ is a $G$-invariant projective subvariety, then we can define the loci of semistable, polystable and stable points as $X^{\ss} := X \cap \PP(W)^{\ss}$, $X^{\ps} := X \cap \PP(W)^{\ps}$ and $X^{\s} := X \cap \PP(W)^{\s}$. One has the commutative diagram
$$\xymatrix{
\hspace{0.2cm}X^{\s}\,  \ar@{^(->}[r] \ar[d]		& \hspace{0.3cm}X^{\ss} \ar[d] \\
X^{\s} \gitq G \, \ar@{^(->}[r]			& X^{\ss} \gitq G := \Proj \bigoplus_{j \ge 0} \Gamma(X, \oh(j))^G,
}$$
where
\begin{itemize}
\item $X^{\ss} \to X^{\ss} \gitq G$ is called the {\it GIT quotient of $X^{\ss}$ by $G$} and the points of $X^{\ss} \gitq G$ are in bijective correspondence with the closed $G$-orbits in $X^{\ss}$ (i.e., the $G$-orbits of polystable points);
\item $X^{\s} \to X^{\s} \gitq G$ is called the {\it geometric GIT quotient of $X^{\s}$ by $G$} and the points of $X^{\s} \gitq G$ are in bijective correspondence with  $G$-orbits in $X^{\s}$ (thus $X^{\s} \gitq G$ parametrizes the usual orbit space);
\item $ X^{\ss} \gitq G$ is a projective variety, $X^{\s}\gitq G \subseteq X^{\ss} \gitq G$ is an open subvariety and $X^{\s}$ is the preimage of $X^{\s} \gitq G$ under the quotient morphism $X^{\ss} \to X^{\ss} \gitq G $.
\end{itemize}

If $Y$ is a variety over $\CC$ with an action of $G$, we say that a surjective $G$-invariant morphism $\pi \co Y \to Z$ is a {\it good GIT quotient}\, if $\pi$ is affine and $\oh_Z = (\pi_* \oh_Y)^G$. If a good quotient exists, it is unique up to isomorphism and is often denoted by $Y \gitq G$.  The morphisms $W \to W \gitq G$, $X^{\ss} \to X^{\ss} \gitq G$, and $X^{\s} \to X^{\s} \gitq G$ defined above are all examples of good GIT quotients. 

 If $Y_1 \to Y_1 \gitq G$ and $Y_2 \to Y_2 \gitq G$ are two good GIT quotients, then any morphism $F \co Y_1 \to Y_2$ that sends $G$-orbits to $G$-orbits naturally induces a morphism $\bar{F} \co Y_1 \gitq G \to Y_2 \gitq G$.  We will always denote the quotient morphism by overlining the corresponding symbol.
 
\subsection{The GIT quotient of the morphism $A$}

Identify $V \cong V^{\vee} \cong \CC^n$  and consider the standard (resp.~dual) action of $\SL_n$ on $\oh(\CC^n)_d$ (resp.~$\oh(\CC^n)_{n(d-2)}$). Then, by Lemma \ref{L:equiv1}, the morphism $A \co \PP(\oh(\CC^n)_d)_{\Delta} \to \PP(\oh(\CC^n)_{n(d-2)})$ taking a form to its associated form is $\SL_n$-equivariant.  It is natural to ask the following question:

\begin{ques} \label{ques1}
Is it true that the image $A(f) \in \PP(\oh(\CC^n)_{n(d-2)})$ is semistable for every $f  \in \PP(\oh(\CC^n)_d)_{\Delta}$ and, moreover, that the induced morphism of GIT quotients
$$ \label{E:Abar}
\bar{A} \co \PP(\oh(\CC^n)_d)_{\Delta} \gitq \SL_n \to \PP(\oh(\CC^n)_{n(d-2)})^{\ss} \gitq \SL_n
$$
is an immersion?
\end{ques}

A positive answer to Question \ref{ques1} implies Conjecture \ref{C:main}.  Theorem \ref{T:binary} shows that this question has a positive answer in the case of binary forms. We note that the morphism $A \co \PP(\oh(\CC^n)_d)_{\Delta} \to \PP(\oh(\CC^n)_{n(d-2)})$ is not injective.  Indeed, Example \ref{E:example1} implies that for any nonzero  $a_1, \ldots, a_n  \in \CC$ the image of the form $a_1x_1^d + \cdots + a_n x_n^d$ under $A$ is $(x_1 \cdots x_n)^{d-2}$.  
In \cite{ai}, it was shown that the morphism $\bar{A}$ is generically injective for any $n \ge 2$.

\begin{theorem} \label{T:ai} \cite[Thm.~4.1]{ai}
The morphism $A$ descends to a rational map $\bar{A} \co \PP(\oh(\CC^n)_d)_{\Delta} \gitq \SL_n \dashrightarrow \PP(\oh(\CC^n)_{n(d-2)})^{\ss} \gitq \SL_n$ that is birational onto its image. 
\end{theorem}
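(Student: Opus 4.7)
The plan is to prove Theorem \ref{T:ai} in two stages: first, to show that $A$ sends a nonempty Zariski open subset of $\PP(\oh(\CC^n)_d)_{\Delta}$ into the semistable locus, which together with the $\SL_n$-equivariance of $A$ from Lemma \ref{L:equiv1} produces the rational map $\bar A$ on GIT quotients; and second, to show that $\bar A$ is injective on a dense open subset, which is equivalent to its being birational onto its image.

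For the first stage, I would test the morphism at the Fermat form $f_0 := x_1^d + \cdots + x_n^d$. By Example \ref{E:example1}, $A(f_0)$ is a nonzero scalar multiple of $F_0 := (y_1 \cdots y_n)^{d-2}$. The point $[F_0] \in \PP(\oh(\CC^n)_{n(d-2)})$ is semistable for the $\SL_n$-action: the Newton polytope of $F_0$ reduces to the single lattice point $(d-2,\ldots,d-2)$, which is the barycenter of the degree-$n(d-2)$ simplex, so every weight of $F_0$ under every one-parameter subgroup of the diagonal torus vanishes; the classical Mumford multiplicity criterion applied to the coordinate hyperplane configuration defined by $F_0$ then shows that semistability persists for every one-parameter subgroup of $\SL_n$ (it holds with equality at every coordinate subspace). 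Openness of the semistable locus and $\SL_n$-equivariance then yield the required rational map $\bar A$.

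For the second stage, I would restrict to the $\SL_n$-invariant dense open subset of $\PP(\oh(\CC^n)_d)_{\Delta}$ on which both $[f]$ and $A([f])$ are polystable. Suppose $\bar A([f_1]) = \bar A([f_2])$ there. Then there exist $g \in \SL_n$ and $\lambda \in \CC^*$ with $A(f_1) = \lambda \cdot g \cdot A(f_2)$. By Proposition \ref{P:inverse-system}, $A(f_i)^{\perp}$ coincides with the Jacobian ideal $J(f_i)$, and therefore $J(f_1) = g \cdot J(f_2)$ as graded ideals in $\oh(\CC^n)$, yielding a graded $\CC$-algebra isomorphism $\varphi \co M(f_2) \iso M(f_1)$ induced by $g$. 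By the Mather--Yau theorem, the hypersurface germs $(\{f_1 = 0\}, 0)$ and $(\{f_2 = 0\}, 0)$ are biholomorphic; invoking the classical rigidity for homogeneous isolated hypersurface singularities (namely, that any such biholomorphism is realized by a linear change of coordinates) shows that $f_1$ and $f_2$ are $\GL_n$-equivalent, so $[f_1] = [f_2]$ in the GIT quotient.

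The main obstacle is the final rigidification step: promoting an abstract graded algebra isomorphism of Milnor algebras to a genuine linear equivalence of the defining forms. Two distinct forms may share the same Jacobian ideal, and hence the same graded Milnor algebra, so one must argue that this pathology is confined to a proper closed subvariety of $\PP(\oh(\CC^n)_d)_{\Delta}$. A clean route combines the explicit Macaulay inverse system formula of Proposition \ref{P:inverse-system} with the fact that the socle of $M(f)$ is generated by $\bar{\hess(f)}$, which generically pins down the form up to scalar; equivalently, one may invoke the classical result that homogeneous isolated hypersurface singularities of degree $\ge 3$ are right-equivalent as germs if and only if they are linearly equivalent as forms. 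Handling this genericity carefully is what separates a proof of birationality from the weaker statement of dominance.
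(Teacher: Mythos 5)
Your overall strategy (produce the rational map via generic semistability, then prove generic injectivity via the Macaulay inverse system) is parallel to the argument in \cite{ai}, and your second stage is essentially sound once its hypotheses are granted: if $A(f_1)=\lambda\, g A(f_2)$ then Proposition \ref{P:inverse-system} together with the equivariance of Lemma \ref{L:equiv1} gives $J(f_1)=J(gf_2)$, and Mather--Yau plus the linear rigidity of homogeneous germs (or, alternatively, \cite[Prop.~1.1]{D}) yields $[f_1]=[f_2]$. The difficulty you flag at the end --- that a Jacobian ideal might not pin down the form --- is in fact the part that is already handled by exactly these tools.

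The genuine gap is elsewhere: you never establish that $A(f)$ is \emph{polystable} (let alone stable) for a dense set of $f$, yet your injectivity argument is confined to ``the $\SL_n$-invariant dense open subset on which both $[f]$ and $A([f])$ are polystable.'' Polystability is not an open condition, so this set is not obviously open, and your only explicit computation --- the Fermat form, whose image $(y_1\cdots y_n)^{d-2}$ is a torus-fixed point --- lands precisely in the \emph{strictly} semistable locus. Openness of semistability therefore certifies only that $A$ generically hits $\PP(\oh(\CC^n)_{n(d-2)})^{\ss}$; it gives no point at which $A(f)$ is stable, and on the strictly semistable locus the equality $\bar{A}([f_1])=\bar{A}([f_2])$ only says that the orbit closures of $A(f_1)$ and $A(f_2)$ meet, from which you cannot extract the relation $A(f_1)=\lambda\, gA(f_2)$ that your second stage needs. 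This is exactly the role of \cite[Prop.~4.3]{ai}, recalled after the statement of Theorem \ref{T:ai}: for generic $f$ one has $\Delta(A(f))\neq 0$, so $A(f)$ is a smooth hypersurface of degree $\ge 3$ and hence \emph{stable}; this single fact simultaneously defines $\bar{A}$ and legitimizes your injectivity computation on a dense open set. Without it (or some substitute, e.g.\ exhibiting one $f$ with $A(f)$ stable and invoking openness of the stable locus), the proposal proves only that $\bar{A}$ is a well-defined rational map, not that it is birational onto its image.
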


In \cite{ai}, the above theorem was proven as an easy consequence of the following fact \cite[Prop.~4.3]{ai}:  for a generic form $f  \in \PP(\oh(\CC^n)_d)_{\Delta}$, the image $A(f)$ is nondegenerate {\rm (}i.e., $\Delta(A(f)) \neq 0${\rm )} and in particular is stable. 

\subsection{The GIT quotients of the morphisms $\nabla$ and $\bA$}
In order to address Questions \ref{ques1}, it is natural to utilize $\SL_n$-equivariant factorization \eqref{E:factorization-proj}. First, we consider the semistable locus of the Grassmannian $\Gr(n, \oh(\CC^n)_{d-1})$ with respect to the Pl\"ucker embedding
$$\begin{aligned}
\Gr(n, \oh(\CC^n)_{d-1})  & \to \PP\Bigl( \bigwedge\nolimits^n \oh(\CC^n)_{d-1}\Bigr), \\
 [W \subseteq \oh(\CC^n)_{d-1}]  &\mapsto \left[\bigwedge\nolimits^n W \subseteq \bigwedge\nolimits^n \oh(\CC^n)_{d-1}\right].
 \end{aligned}$$
In fact, since every very ample line bundle on the Grassmannian is a positive power of the line bundle obtained via the Pl\"ucker embedding and every such line bundle $\cL$ has a unique $\SL_n$-linearization (i.e., a choice of an $\SL_n$-action on the space of global sections $U:=\Gamma(\Gr(n, \oh(\CC^n)_{d-1}), \cL)$ such that $\Gr(n, \oh(\CC^n)_{d-1}) \subseteq \PP(U^{\vee})$ is $\SL_n$-equivariant), it follows that there are well-defined loci $\Gr(n, \oh(\CC^n)_{d-1})^{\ss}$, $\Gr(n, \oh(\CC^n)_{d-1})^{\ps}$, and $\Gr(n, \oh(\CC^n)_{d-1})^{\s}$ independent of the choice of an equivariant embedding.

Next, observe that the rational map $\nabla \co \PP(\oh(\CC^n)_d) \dashrightarrow \Gr(n,\oh(\CC^n)_{d-1})$ is defined on semistable forms. Indeed, suppose that $f \in \oh(\CC^n)_d$ has linearly dependent partial derivatives $f_{x_{{}_1}},\dots,f_{x_{{}_n}}$. Upon passing to a linearly equivalent form, we can assume that $f_{x_{{}_n}} = 0$. Consider the 
1-parameter subgroup  $\rho \co \CC^* \to \SL_n, t \mapsto \hbox{diag}(t^{-1}, \ldots, t^{-1}, t^{n-1})$.  Letting $t\ra 0$, we see that the origin lies in the closure of the $\rho$-orbit of $f$, which implies that the image of $f$ in $\PP(\oh(\CC^n)_d)$ is not semistable.

Therefore, in the spirit of Question \ref{ques1}, we can ask:
\begin{ques} \label{ques2}
Is it true that the image $\nabla(f) \in \Gr(n,\oh(\CC^n)_{d-1})$ is semistable for every semistable form $f  \in \PP(\oh(\CC^n)_d)$ and that the induced morphism of GIT quotients
$$\bar{\nabla}\co\PP(\oh(\CC^n)_d)^{\ss} \gitq \SL_n \to \Gr(n, \oh(\CC^n)_{n(d-2)})^{\ss} \gitq \SL_n$$
is a closed immersion?
\end{ques}

In the case of binary forms, Propositions \ref{P:gradient-ss} and \ref{C:closed}
show that this question has a positive answer. Recently, M. Fedorchuk has proven that $\nabla$ preserves semistability for any $n$ (see \cite{F}).

We note that in order to show that Question \ref{ques1} has a positive answer (and thus Conjecture \ref{C:main} holds), it suffices to prove that there is a factorization
$$
\bar{A}\co \PP(\oh(\CC^n)_d)_{\Delta} \gitq \SL_n \xrightarrow{\bar{\nabla}} \Gr(n, \oh(\CC^n)_{d-1})_{\Res} \gitq \SL_n\xrightarrow{\bar{\bA}} \PP(\cO(\CC^n)_{n(d-2)})^{\ss} \gitq \SL_n
$$
where both morphisms are immersions. In the above factorization, $\bar{\nabla}$ is always well-defined, and a positive answer to Question \ref{ques2} would imply that it is a closed immersion.  On the other hand, Proposition \ref{P:A-immersion} yields that $\bA\co \Gr(n, \oh(\CC^n)_{d-1})_{\Res}\rightarrow \PP(\cO(\CC^n)_{n(d-2)})$ is an immersion.  To check that there is an induced morphism $\bar{\bA}$, one would need to verify that for any $W \in \Gr(n, \oh(\CC^n)_{d-1})_{\Res}$ the image $\bA(W)$ is semistable. Moreover, to show that $\bar{\bA}$ is an immersion, it suffices to check that for every polystable subspace $W \in \Gr(n, \oh(\CC^n)_{d-1})_{\Res}$  the image $\bA(W)$ is polystable. We take precisely this approach to prove Theorem \ref{T:binary} for binary forms:  $\bar{\nabla}$ is shown to be a closed immersion in Proposition \ref{C:closed} and $\bA$ is shown to map $\Gr(2, \oh(\CC^2)_{d-1})_{\Res}$ to the semistable locus and to preserve polystability in Propositions \ref{nonzerocatal}, \ref{P:A-iso}, respectively. We note that in paper \cite{F} mentioned above the fact that for $W \in \Gr(n, \oh(\CC^n)_{d-1})_{\Res}$ the image $\bA(W)$ is semistable was obtained for any $n$.

\section{The case of binary forms}\label{casebinforms}
\setcounter{equation}{0}

From this section onwards we assume that $n=2$. In this situation, the variables are denoted by $x,y$. As always, we assume that $d\ge 3$. 

\subsection{The catalecticant}\label{mainressect} \label{S:catalecticant}
Let
$
f=\sum_{i=0}^{2N}{2N \choose i}a_ix^{2N-i}y^i\in{\mathcal O}(\CC^2)_{2N}
$
be a binary form of even degree $2N$. The catalecticant of $f$ is then defined as follows:
$$
\Cat(f):=\det\left(\begin{array}{cccc}
a_0 & a_1 & \dots & a_N\\
a_1 & a_2 & \dots & a_{N+1}\\
\vdots & \vdots & \ddots & \vdots\\
a_N & a_{N+1} & \dots & a_{2N}
\end{array}
\right).
$$
It is $\SL_2$-invariant and does not vanish if and only if the $N+1$ partial derivatives of $f$ of order $N$ are linearly independent in $\oh(\CC^2)_N$. Notice that the catalecticant is defined on the target space of the morphism $\bA \co (\oh(\CC^2)_{d-1}^{\oplus 2})_{\Res} \to \oh(\CC^2)_{2(d-2)}$.  

\begin{proposition}\label{nonzerocatal} \label{P:cat}
For any\, ${\mathbf f}\in (\oh(\CC^2)_{d-1}^{\oplus 2})_{\Res}$, we have $\Cat({\mathbf A}({\mathbf f}))\ne 0$.
\end{proposition}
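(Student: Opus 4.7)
The plan is to translate the nonvanishing of the catalecticant into a statement about the ideal $\bA(\bbf)^{\perp}\subseteq \oh(\CC^2)$, and then read off the conclusion from Proposition~\ref{P:inverse-system}. For a binary form $F$ of degree $2N$, the condition $\Cat(F)\neq 0$ is equivalent to the linear independence in $\oh((\CC^2)^{\vee})_N$ of the $N+1$ partial derivatives of $F$ of order $N$. Via the polar pairing, these partial derivatives are precisely the images of the monomial basis $x^N, x^{N-1}y,\dots,y^N$ of $\oh(\CC^2)_N$ under the contraction map $\oh(\CC^2)_N\to \oh((\CC^2)^{\vee})_N$, $h\mapsto h\circ F$. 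Linear independence of the images of a basis is equivalent to the injectivity of this contraction map, which in turn is equivalent to $F^{\perp}\cap \oh(\CC^2)_N=0$.

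In our setting $F=\bA(\bbf)$ and $N=d-2$, so the task reduces to showing that $\bA(\bbf)^{\perp}\cap \oh(\CC^2)_{d-2}=0$. By Proposition~\ref{P:inverse-system}, $\bA(\bbf)^{\perp}=(f_1,f_2)$, and since both generators have degree $d-1$, every nonzero homogeneous element of this ideal has degree at least $d-1>d-2$. Hence the intersection is trivially zero, and the proposition follows.

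I do not expect any genuine obstacle: all the substantive content has already been absorbed into Proposition~\ref{P:inverse-system}, and the remainder is the elementary observation that the socle-half-degree $d-2$ of the Macaulay inverse system $\bA(\bbf)$ is strictly smaller than the minimum degree $d-1$ of a generator of $\bA(\bbf)^{\perp}$. Equivalently, one may phrase the argument in terms of Hilbert functions: by Remark~\ref{R:hilbert-function}, $\dim M(\bbf)_{d-2}=d-1=\dim\oh(\CC^2)_{d-2}$, forcing the quotient map $\oh(\CC^2)_{d-2}\to M(\bbf)_{d-2}$ to be an isomorphism, which is the same statement phrased dually.
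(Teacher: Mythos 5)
Your proof is correct and follows essentially the same route as the paper: both reduce the nonvanishing of the catalecticant to the vanishing of $\bA(\bbf)^{\perp}$ in degree $d-2$ via Proposition~\ref{P:inverse-system}, the paper citing the Hilbert function of $M(\bbf)$ where you use the (even simpler) observation that the ideal $(f_1,f_2)$ is generated in degree $d-1$. Your explicit unpacking of why $\Cat(F)\neq 0$ is equivalent to $F^{\perp}\cap\oh(\CC^2)_{d-2}=0$ is a correct elaboration of a step the paper leaves implicit.
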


\begin{proof} 
Write $\bbf = (f_1, f_2)$.  Proposition \ref{P:inverse-system} implies that there is an isomorphism $M(f) = \CC[x, y]/ (f_1, f_2) \cong \CC[x,y] / \bA(\bbf)^{\perp}$. Since $\dim M(f)_{d-2} = d-1$, we have $\dim (\CC[x,y] / \bA(\bbf)^{\perp})_{d-2} = d-1$ or in other words $(\bA(\bbf)^{\perp})_{d-2} = 0$.  It follows that the $d-1$ partial derivatives of $\bA(\bbf)$ of order $d-2$ are linearly independent.
\end{proof}

\begin{remark} The statement that for $f \in \oh(\CC^2)_{d, \Delta}$ one has $\Cat(A(f)) \neq 0$ had been established in \cite[Prop.~5.1]{ai}.
\end{remark}

Proposition \ref{nonzerocatal} shows that $\bA$ is a morphism of affine varieties
\begin{equation} \label{E:A-cat}
\bA \co \Gr(2, \oh(\CC^2)_{d-1})_{\Res} \to \PP(\oh(\CC^2)_{2(d-2)})_{\Cat},
\end{equation}
with $\PP(\oh(\CC^2)_{2(d-2)})_{\Cat}$ being the affine open subvariety of $\PP(\oh(\CC^2)_{2(d-2)})$ where the catalecticant does not vanish.

\subsection{The morphism $\bA$ is an isomorphism}

\begin{prop} \label{P:A-iso} \it
The morphism $\bA \co \Gr(2, \oh(\CC^2)_{d-1})_{\Res} \to \PP(\oh(\CC^2)_{2(d-2)})_{\Cat}$ is an isomorphism.  
\end{prop}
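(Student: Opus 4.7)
The plan is to combine Proposition \ref{P:A-immersion} with a direct apolarity argument. By Proposition \ref{P:A-immersion} specialized to $n=2$, $\bA$ is already an isomorphism onto the locally closed subvariety $U_{\Res} \subseteq \PP(\oh(\CC^2)_{2(d-2)})$, and Proposition \ref{nonzerocatal} gives $U_{\Res} \subseteq \PP(\oh(\CC^2)_{2(d-2)})_{\Cat}$. Thus it suffices to establish the reverse inclusion: for every $F$ with $\Cat(F) \neq 0$, one has (a) $\dim (F^{\perp})_{d-1} = 2$ and (b) any basis of $(F^{\perp})_{d-1}$ has nonvanishing resultant, i.e.\ no common factor.

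Step (a) reduces to a clean Hilbert-function computation. The condition $\Cat(F) \neq 0$ is, by the definition of the catalecticant, exactly $(F^{\perp})_{d-2} = 0$. Since $F^{\perp}$ is a homogeneous ideal in the domain $\oh(\CC^2)$, any $p \in (F^{\perp})_i$ with $i \leq d-2$ satisfies $p \cdot q \in (F^{\perp})_{d-2} = 0$ for every nonzero $q \in \oh(\CC^2)_{d-2-i}$, forcing $p=0$; thus $(F^{\perp})_i = 0$ for all $i \leq d-2$. Combined with the symmetry of the Hilbert function of the standard graded Artinian Gorenstein algebra $R := \oh(\CC^2)/F^{\perp}$ of socle degree $2(d-2)$ (Proposition \ref{prop-correspondence}), this pins down the Hilbert function as $(1,2,\ldots,d-1,d-2,\ldots,2,1)$, so $\dim (F^{\perp})_{d-1} = d-(d-2) = 2$.

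The main obstacle is step (b), which I plan to handle by contradiction using Macaulay inverse systems. Suppose a basis $\{f_1, f_2\}$ of $(F^{\perp})_{d-1}$ has a common factor $\ell = \gcd(f_1,f_2)$ of degree $k \geq 1$, so $f_i = \ell g_i$ with coprime $g_1, g_2 \in \oh(\CC^2)_{d-1-k}$. Linear independence of $f_1, f_2$ forces $k \leq d-2$, so step (a) gives $\ell \notin F^{\perp}$, whence $G := \ell \circ F \in \oh(\CC^2)_{2(d-2)-k}$ is nonzero. The associativity identity $(\ell h)\circ F = h \circ (\ell \circ F) = h \circ G$ shows that for any $h$ with $\ell h \in (F^{\perp})_{d-1}$ we have $h \in (G^{\perp})_{d-1-k}$; taking $h = g_1, g_2$, we conclude $g_1, g_2 \in (G^{\perp})_{d-1-k}$. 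Since $g_1, g_2$ are coprime of the same degree, $(g_1, g_2) \subseteq G^{\perp}$ is a complete intersection and $\oh(\CC^2)/(g_1,g_2)$ is Artinian with socle degree $2(d-1-k)-2 = 2(d-2)-2k$.

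Finally, the inclusion $(g_1, g_2) \subseteq G^{\perp}$ induces a graded surjection $\oh(\CC^2)/(g_1,g_2) \twoheadrightarrow \oh(\CC^2)/G^{\perp}$. In degree $2(d-2)-k$ the domain vanishes, since $2(d-2)-k > 2(d-2)-2k$ for $k \geq 1$, while the codomain has dimension $1$, being the socle of $\oh(\CC^2)/G^{\perp}$ whose socle degree equals $\deg G = 2(d-2)-k$. This contradicts surjectivity and forces $k=0$. Hence $f_1, f_2$ are coprime, $F \in U_{\Res}$, and $\bA$ is an isomorphism onto $\PP(\oh(\CC^2)_{2(d-2)})_{\Cat}$.
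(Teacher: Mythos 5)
Your proposal is correct. Steps one and two (reduction via Proposition \ref{P:A-immersion} and the Hilbert-function computation showing $\dim (F^{\perp})_{d-1}=2$ from $(F^{\perp})_{d-2}=0$ and Gorenstein symmetry) coincide with the paper's argument. Where you genuinely diverge is in ruling out a common factor of $(F^{\perp})_{d-1}$. The paper writes $f_i = p h_i$ with $h_1,h_2$ coprime of degree $m$, uses the Hilbert function of $(h_1,h_2)$ to show $(f_1,f_2)$ and $F^{\perp}$ agree in degree $d+m-3$, and then manufactures an explicit element $q = p\,(h_1/l_1)(h_2/l_2)$ whose membership in both $F^{\perp}$ and $(f_1,f_2)$ yields a contradiction with the coprimality of $h_1/l_1$ and $h_2/l_2$. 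You instead pass to the contracted inverse system $G = \ell \circ F$, observe via $(\ell h)\circ F = h\circ G$ that the coprime cofactors $g_1,g_2$ lie in $G^{\perp}$, and compare socle degrees: the complete intersection $\oh(\CC^2)/(g_1,g_2)$ dies in degree $2(d-2)-2k$, yet it must surject onto $\oh(\CC^2)/G^{\perp}$, which is nonzero in degree $\deg G = 2(d-2)-k$. All the inputs you use (nonvanishing of $G$ because $(F^{\perp})_{\le d-2}=0$, the socle degree of $\oh(\CC^2)/G^{\perp}$, the Hilbert function of a binary complete intersection) are available from the paper's Section 2, so the argument is complete. Your route is arguably cleaner and more conceptual---it is the standard apolarity device of differentiating the inverse system by the common factor, and it avoids the paper's somewhat delicate choice of the auxiliary form $q$ and the degree bookkeeping around $d+m-3$; the paper's version, on the other hand, stays entirely inside the ideal $(f_1,f_2)$ and needs no facts about $G^{\perp}$ beyond Remark \ref{invsysrem}.
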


\begin{proof}
Recall from Section \ref{sectimage} that $U_{\Res} \subseteq \PP(\oh(\CC^2)_{2(d-2)})$ denotes the locus of forms $F$ such that $\dim F^{\perp} \cap \oh(\CC^2)_{d-1} = 2$ and the subspace $F^{\perp} \cap \oh(\CC^2)_{d-1}$ has nonzero resultant.  
By Proposition \ref{P:A-immersion}, the morphism $\bA \co  \Gr(2, \oh(\CC^2)_{d-1})_{\Res} \to U_{\Res}$ is an isomorphism, and by Proposition \ref{P:cat}, we have the inclusion $U_{\Res} \subseteq \PP(\oh(\CC^2)_{2(d-2)})_{\Cat}$. To obtain the other inclusion, fix a form $F \in \oh(\CC^2)_{2(d-2)}$ with $\Cat(F) \neq 0$ and let $W:=F^{\perp} \cap \oh(\CC^2)_{d-1}$. First, observe that $\dim F^{\perp} \cap \oh(\CC^2)_{d-2} = 0$, which necessarily implies $\dim F^{\perp} \cap \oh(\CC^2)_{i} = 0$ for all $i \le d-2$.  Therefore, for $j=0, \ldots, d-2$, we have $\dim (\CC[x,y]/F^{\perp})_{d-2+j} = \dim (\CC[x,y]/F^{\perp})_{d-2-j} =d-1-j$, where we used the symmetry of the Hilbert function, and thus  $\dim F^{\perp} \cap \oh(\CC^2)_{d-2+j}= 2j$.   Taking $j=1$, we obtain that $W$ is 2-dimensional.  

Next, if $W=\langle f_1, f_2 \rangle$, we need to prove that $\Res(f_1,f_2)\ne 0$. If this is not the case, then  $f_1$, $f_2$ have a common factor $p$, and we can write $f_1 = p h_1$,\linebreak $f_2 = p h_2$ where $h_1$, $h_2$ are forms of some positive degree $m$ without common linear factors. Since $h_1$, $h_2$ have nonzero resultant, it follows from Remark \ref{R:hilbert-function} that $\dim (h_1, h_2) \cap \oh(\CC^2)_{m-1+j} = 2j$ for $j=0, \ldots, m-1$.  We then obtain $\dim (f_1, f_2) \cap \oh(\CC^2)_{d-2+j} = 2j$ for $j=0, \ldots, m-1$. In particular, the inclusion of ideals $(f_1, f_2) \subseteq F^{\perp}$ is an equality in degree $d+m-3$. Now choose linear factors $l_1 \, | \, h_1$, $l_2 \, | \, h_2$ and set $q : = p \frac{h_1}{l_1} \frac{h_2}{l_2}$, which is a form of degree $d+m-3$. Since $l_1 q, l_2 q \in F^{\perp}$, it follows that $q\in F^{\perp}$. Hence $m>1$ since otherwise $q$ is in $F^{\perp}\cap{\mathcal O}(\CC^2)_{d-2}=0$. As $q$ lies in the ideal $(f_1, f_2)$, writing $q = a_1 f_1 + a_2 f_2$ for $a_1, a_2 \in\CC[x,y]_{m-2}$ and factoring out $p$ yields $\frac{h_1}{l_1} \frac{h_2} {l_2} = a_1 h_1 + a_2 h_2$, which contradicts the fact that $h_1/l_1$ and $h_2/l_2$ have no common linear factors. \end{proof}

Taking the GIT quotients of the factorization provided by \eqref{E:factorization-proj} and \eqref{E:A-cat}, we obtain the factorization
$$
\bar{A}: \PP(\oh(\CC^2)_d)_{\Delta} \gitq \SL_2 \xrightarrow{\bar{\nabla}} \Gr(2, \oh(\CC^2)_{d-1})_{\Res} \gitq \SL_2\xrightarrow{\bar{\bA}} \PP(\cO(\CC^2)_{2(d-2)})_{\Cat} \gitq \SL_2.
$$
By Proposition \ref{P:A-iso}, the morphism $\bar{A}$ is a closed immersion if and only if $\bar{\nabla}$ is a closed immersion.  We conclude:

\begin{cor} \label{C:equiv-conj} \it
Theorem {\rm \ref{T:binary}} is equivalent to the morphism 
$$\bar{\nabla} \co \PP(\oh(\CC^2)_d)_{\Delta} \gitq \SL_2 \to \Gr(2, \oh(\CC^2)_{d-1})_{\Res} \gitq \SL_2$$ being a closed immersion.
\end{cor}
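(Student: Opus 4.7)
The plan is to combine the reformulation of Theorem~\ref{T:binary} given right after its statement with the $\SL_2$-equivariant projective factorization \eqref{E:factorization-proj} refined by \eqref{E:A-cat}, and then to invoke Proposition~\ref{P:A-iso}.

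First, Proposition~\ref{nonzerocatal} shows that $A$ sends $\cO(\CC^2)_{d,\Delta}$ into $\cO(\CC^2)_{2(d-2),\Cat}$, so the morphism in Theorem~\ref{T:binary} is well-defined. Per the discussion immediately following Theorem~\ref{T:binary}, the theorem is equivalent to the induced map of affine good GIT quotients
$$\bar{A}\co \cO(\CC^2)_{d,\Delta}\gitq\GL_2 \to \cO(\CC^2)_{2(d-2),\Cat}\gitq\GL_2$$
being a closed immersion. The next step is to identify this affine $\GL_2$-quotient morphism with the projective $\SL_2$-quotient morphism appearing in the factorization displayed just before the corollary. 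Since the center $\CC^*\subseteq \GL_2$ acts by a nonzero scalar weight on homogeneous forms and both $\Delta$ and $\Cat$ are $\SL_2$-invariants of positive degree in the coefficients, the ring of $\GL_2$-invariant regular functions on the source (resp.\ target) is exactly the subring of scale-invariant $\SL_2$-invariants in the localized coordinate ring, which is the coordinate ring of $\PP(\cO(\CC^2)_d)_\Delta\gitq\SL_2$ (resp.\ $\PP(\cO(\CC^2)_{2(d-2)})_\Cat\gitq\SL_2$). Thus being a closed immersion is an equivalent property in the affine-$\GL_2$ and projective-$\SL_2$ formulations.

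Second, the $\SL_2$-equivariant factorization \eqref{E:factorization-proj} together with the refinement \eqref{E:A-cat} descends to the factorization $\bar{A}=\bar{\bA}\circ\bar{\nabla}$ of $\SL_2$-GIT quotients displayed immediately before the corollary. By Proposition~\ref{P:A-iso}, the morphism $\bA\co\Gr(2,\oh(\CC^2)_{d-1})_\Res\to\PP(\oh(\CC^2)_{2(d-2)})_\Cat$ is an isomorphism, hence the induced $\bar{\bA}$ on $\SL_2$-GIT quotients is an isomorphism as well. Consequently $\bar{A}$ is a closed immersion if and only if $\bar{\nabla}$ is, which is the content of the corollary. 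There is no genuine obstacle here: the statement is a formal bookkeeping consequence of Propositions~\ref{nonzerocatal} and~\ref{P:A-iso} combined with the affine/projective GIT identification just described; all of the substantive work is deferred to the later proof that $\bar{\nabla}$ is actually a closed immersion.
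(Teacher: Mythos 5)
Your argument is correct and follows essentially the same route as the paper: descend the factorization $A=\bA\circ\nabla$ to GIT quotients and use Proposition~\ref{P:A-iso} to conclude that $\bar{\bA}$ is an isomorphism, so $\bar{A}$ is a closed immersion if and only if $\bar{\nabla}$ is. Your explicit justification of the identification between the affine $\GL_2$-quotients and the projective $\SL_2$-quotients is a point the paper leaves implicit, but it is the standard one and does not change the argument.
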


In Section \ref{propgamm}, we will establish Theorem {\rm \ref{T:binary}} by showing that in fact the induced quotient morphism $\bar{\nabla} \co \PP(\oh(\CC^2)_d)^{\ss}\gitq \SL_2 \to \Gr(2, \oh(\CC^2)_{d-1})^{\ss} \gitq \SL_2$ of the entire semistable loci is a closed immersion.

\section{The morphism $\nabla$ and stability }\label{S:stability}
\setcounter{equation}{0}

The rational map $\nabla \co \PP(\oh(\CC^2)_d) \dashrightarrow \Gr(2, \oh(\CC^2)_{d-1})$ given by $f \mapsto \langle f_x, f_y \rangle$ is well-defined  on forms that are not powers of linear forms and, in particular, on semistable forms.   In this section, we show that if $f$ is semistable, then so is $\nabla(f)$ (Proposition \ref{P:gradient-ss}) and that if $f$ is polystable, then so is $\nabla(f)$ (Proposition \ref{P:gradient-ps}).  On the other hand, it is not true that if $f$ is stable, then $\nabla(f)$ is stable; indeed $\nabla(x^d+y^d)$ is the subspace $\langle x^{d-1}, y^{d-1} \rangle$, which is fixed by $\CC^*$ and thus is not stable.

These results will allow us to show that the induced morphism of quotients 
$$\bar{\nabla} \co \PP(\oh(\CC^2)_d)^{\ss}\gitq \SL_2 \to \Gr(2, \oh(\CC^2)_{d-1})^{\ss} \gitq \SL_2$$
is finite and injective (Corollary \ref{C:finite-injective}).  In Section \ref{propgamm}, we will prove that its image is normal, which will lead to the conclusion that $\bar{\nabla}$ is a closed immersion. 

On the other hand, we feel that investigating the stability of subspaces in the Grassmannian and the preservation of stability by the morphism $\nabla$ are independently interesting. In particular, we include a geometric characterization of the semi(stable) locus in $\Gr(2, \oh(\CC^2)_{d-1})$, which is used in the proof of Theorem \ref{T:binary} below (but is not strictly necessary for it). 
  
\subsection{Semistability of subspaces}\label{semistabilsubspaces} We first study the (semi)stabi\-li\-ty of elements of the Grassmannian $\Gr(2, \oh(\CC^2)_m)$ for any $m\ge 1$ by utilizing the Hilbert-Mumford criterion (cf.~\cite[Thm.~2.1]{git}).  By this criterion, a subspace $W \in\Gr(2, \oh(\CC^2)_m)$ is (semi)stable if and only if it is (semi)stable with respect to the action of every one-parameter subgroup $\rho \co {\mathbb C}^*  \to \SL_2$. The $\rho$-(semi)stability of $W$ is understood by calculating the Hilbert-Mumford index $\mu(W,\rho)$. It can be computed as follows.   

Choose any coordinates $x,y$ in $\CC^2$ in which $\rho$ is given by diagonal matrices
$$
\rho \co {\mathbb C}^*  \to \SL_2,\quad t\mapsto\begin{pmatrix} t^{\tau} & 0 \\ 
0 &  t^{-\tau}
   \end{pmatrix},
$$
with $\tau\in\ZZ$, and for every monomial $x^{m-i}y^i\in \oh(\CC^2)_m$, define its weight as $\wt(x^{m-i}y^i) := \tau(m-i)-\tau i=\tau(m-2i)$. Consider now a 2-dimensional subspace $W =\langle f_1,f_2\rangle$ and write the basis forms $f_1,f_2$ in the coordinates $x, y$ as
\begin{equation}
\begin{aligned}
f_1  = \sum_{i=0}^m {m \choose i} a_i x^{m-i}y^i,\quad f_2  = \sum_{i=0}^m  {m \choose i}b_i x^{m-i}y^i.\label{pandq}
\end{aligned}
\end{equation}
Under the Pl\"ucker embedding of $\Gr(2,\oh(\CC^2)_m)$ in $\PP(\bigwedge\nolimits^2  \oh(\CC^2)_m)$ the subspace $W$ is mapped to the line spanned by
$$
f_1\wedge f_2=\sum_{0\le i<j\le m}{m \choose i}{m \choose j}(a_ib_j-a_jb_i)x^{m-i}y^i\wedge x^{m-j}y^j.\label{f1wedgef2}
$$
Then the Hilbert-Mumford index of $W$ is computed as
\begin{equation}
\makebox[250pt]{$\begin{array}{l}
\hspace{0.8cm}\mu(W,\rho)=\max_{0\le i<j\le m}\left\{-(\wt(x^{m-i}y^i)+\wt(x^{m-j}y^j))\mid (a_ib_j-a_jb_i)\ne 0\right\},
\end{array}$}\label{hilmumindcom}
\end{equation}
and the subspace $W$ is $\rho$-semistable (resp.~$\rho$-stable) if and only if $\mu(W,\rho)\,\ge0$ (resp.~$\mu(W,\rho)\,>0$). 

Thus, $W$ is semistable (resp.~stable) if and only
if for every choice of coordinates $x,y$ in $\CC^2$ and $\tau\in\ZZ$ there exist two monomials $x^{m-i}y^i$, $x^{m-j}y^j$ in $\oh(\CC^2)_m$ such that the sum of their weights is nonpositive (resp.~ne\-gative) and the vectors $(a_i,b_i)$, $(a_j,b_j)$ of the corresponding coefficients of a basis of $W$ written in the coordinates $x,y$ are linearly independent. It is clear that it suffices to consider only one-parameter subgroups $\rho \co \CC^* \to \SL_2$ that can be diagonalized with $\tau=-1$, in which case formula (\ref{hilmumindcom}) turns into
\begin{equation}
\mu(W,\rho)=\max_{0\le i<j\le m}\left\{2(m-i-j)\mid(a_ib_j-a_jb_i)\ne 0\right\}.\label{hilmumindcom1}
\end{equation}

\subsection{A geometric characterization of the semistability of subspaces} \label{S:geometric-characterization}
It is well-known that $f \in \PP(\oh(\CC^2)_m)$ is semistable (resp.~stable) if and only if $f$ has no roots of multiplicity $>m/2$ (resp.~$\ge m/2$). If $f$ is strictly semistable (i.e., semistable but not stable), then there is a root of multiplicity $m/2$ (in particular $m$ is even), which implies that the unique closed $\SL_2$-orbit in $\PP(\oh(\CC^2)_m)^{\ss}$ that lies in the closure of the orbit of $f$ is the orbit of $x^{m/2}y^{m/2}$. We will now give an analogous characterization of the (semi)stability of a subspace $W\in\Gr(2,\oh(\CC^2)_m)$ using the Hilbert-Mumford criterion as detailed in Section \ref{semistabilsubspaces}.
 
\begin{proposition} \label{P:geometric-characterization}
Let $W\in\Gr(2,\oh(\CC^2)_m)$.  Then $W$ is semistable {\rm (}resp.~stable{\rm )} if and only if there do not exist integers $0 \le i < j$ with $i +j>m$ {\rm (}resp.~$i+j\ge m${\rm )} and a linear form $L$ such that $L^i$ divides every $p\in W$ and there is $q \in W$ divisible by $L^j$.

In particular, a semistable subspace $W$ is strictly semistable if and only if there exist a nonnegative integer $i < m/2$ and a linear form $L$ such 
that $L^{i}$ divides every $p\in W$ and there is $q \in W$ divisible by $L^{m-i}$.  In this case, the unique closed $\SL_2$-orbit in $\Gr(2,\oh(\CC^2)_m)^{\ss}$ that lies in the closure of the orbit of $W$ is the orbit of the subspace $\langle x^{i}y^{m-i}, x^{m-i}y^{i} \rangle$.
\end{proposition}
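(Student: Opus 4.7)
My plan is to apply the Hilbert--Mumford criterion in conjunction with formula~\eqref{hilmumindcom1}. Every one-parameter subgroup $\rho\colon\CC^*\to\SL_2$ can be diagonalised with $\tau=-1$ in suitable coordinates $x,y$, in which case the $y$-axis ranges over all lines of $\CC^2$ as $\rho$ varies. Consequently the proposition reduces to the following computation: in fixed coordinates $x,y$ with $\rho(t)=\mathrm{diag}(t^{-1},t)$, one has $\mu(W,\rho)=2(m-B-B')$, where $B$ is the largest $b$ such that $y^b$ divides every $p\in W$ and $B'$ is the largest $b$ such that $y^b$ divides some nonzero $q\in W$.

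To establish this formula, perform row reduction on the $2\times(m+1)$ coefficient matrix of a basis of $W$; this produces a basis $(f_1,f_2)$ with $\mathrm{mult}_y(f_1)=B$ and $\mathrm{mult}_y(f_2)=B'$, so that in particular $B<B'$ since $W$ is two-dimensional. The Pl\"ucker coordinate indexed by $(B,B')$ is then $a_B b'_{B'}\neq 0$, while for any pair $0\le i<j$ with $i+j<B+B'$ a column-by-column inspection shows the corresponding minor $a_i b'_j-a_j b'_i$ vanishes: if $i<B$ then column $i$ is zero; if $i=B$ and $j<B'$ then $b'_j=0$ by construction; and if $B<i<j$, the constraint $i+j<B+B'$ forces $j<B'$, hence $b'_i=b'_j=0$. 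Therefore the minimum of $i+j$ over nonzero Pl\"ucker coordinates is exactly $B+B'$, and~\eqref{hilmumindcom1} yields the desired expression for $\mu(W,\rho)$.

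The Hilbert--Mumford criterion now gives immediately that $W$ is $\rho$-unstable (respectively, not $\rho$-stable) iff $B+B'>m$ (respectively, $\ge m$); taking $L:=y$, $i:=B$ and $j:=B'$ (for which $i<j$ is automatic) recovers the first part of the proposition. For the strictly semistable case, the existence of some $\rho$ with $\mu(W,\rho)=0$ forces $B+B'=m$ in the corresponding coordinates, yielding $L=y$, $i=B<m/2$, and a basis vector $f_2$ divisible by $L^{m-i}$. Taking the limit of $W$ along $\rho$, which after suitable rescaling of $f_1$ and $f_2$ sends each of them to its leading monomial, produces the $\rho$-fixed subspace $\langle x^iy^{m-i},x^{m-i}y^i\rangle$; that this is the unique closed semistable orbit in the closure of $\SL_2\cdot W$ then follows from standard GIT, together with a direct verification via the first part of the proposition that this subspace is semistable (which uses the factorisation $\alpha x^iy^{m-i}+\beta x^{m-i}y^i=x^iy^i(\alpha y^{m-2i}+\beta x^{m-2i})$ to bound $\mathrm{maxmult}_L$ for every linear form $L$).

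The main technical obstacle is the identification of the minimum index sum $M=B+B'$ via row reduction; once this is in hand, everything else amounts to a mechanical translation between Hilbert--Mumford indices and divisibility by powers of a linear form, together with the $\rho$-limit computation identifying the closed orbit in the closure.
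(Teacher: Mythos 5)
Your proposal is correct and follows essentially the same route as the paper: both apply the Hilbert--Mumford criterion via formula \eqref{hilmumindcom1}, identify the minimal index sum of a nonvanishing Pl\"ucker coordinate with the two multiplicities $B,B'$ (the paper's $k,\ell$, read off from the same row-reduced coefficient matrix) to get $\mu(W,\rho)=2(m-B-B')$, and then compute the limit $\lim_{t\to 0}\rho(t)^{-1}W=\langle x^iy^{m-i},x^{m-i}y^i\rangle$ in the strictly semistable case. The only difference is cosmetic: you phrase $k,\ell$ intrinsically as divisibility multiplicities from the outset, whereas the paper extracts them from the matrix first and translates afterwards.
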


\begin{proof}
Let $\rho \co \CC^* \to \SL_2$ be a one-parameter subgroup diagonalizable with $\tau=-1$ and $x,y$ coordinates in $\CC^2$ in which $\rho$ is diagonal. Fix an element $W\in\Gr(2,\oh(\CC^2)_m)$. Choose basis forms $f_1, f_2$ in $W$, writing them as in (\ref{pandq}), and consider the matrix
\begin{equation} \label{eqn-matrix}
\begin{pmatrix} 	a_0 & a_1 & a_2 & \cdots & a_{m} \\
 				b_0 & b_1 & b_2 & \cdots & b_{m}
\end{pmatrix},
\end{equation}
where the columns are enumerated from $0$ to $m$. Let $k$ be the smallest integer such that the $k$th column is nonzero and $\ell$ be the smallest integer such that the $\ell$th column is linearly independent with the $k$th column. Then by formula (\ref{hilmumindcom1}) we have
\begin{equation}
\mu(W, \rho)=2(m-k-\ell).\label{formulaformu}
\end{equation}
We thus conclude that $W$ is $\rho$-semistable (resp.~$\rho$-stable) if and only if
$k+\ell \le m$ (resp.~$k+\ell< m$). Then the characterization of semistability (resp.~stability) follows immediately. Here the necessity implication is shown by choosing coordinates such that $L=y$ and using the subgroup
\begin{equation}
 {\mathbb C}^*  \to \SL_2,\quad
   t  \mapsto 
   \begin{pmatrix} t^{-1} & 0  \\ 
0 & t
   \end{pmatrix}\label{1PS}
\end{equation}
to destabilize $W$.

To prove the statement contained in the last sentence of the proposition, fix a strictly semistable subspace $W$, with $0\le i< m/2$ and $L$ being the corresponding integer and linear form, and let $x,y$ be the standard coordinates on $\CC^2$. By passing to another subspace in the orbit of $W$, we can assume that $L=y$. We may choose a basis $f_1,f_2$, written as in (\ref{pandq}), so that matrix (\ref{eqn-matrix}) is of the form
$$
\begin{pmatrix}
	0 & \cdots & 0 & a_i & a_{i+1} & \cdots & a_{m-i-1} & a_{m-i}  & \cdots & a_m\\
 	0 & \cdots & 0 & 0 & 0 & \cdots & 0 & b_{m-i} & \cdots & b_{m}
\end{pmatrix}.
$$
Let $k$ and $\ell$ be the integers defined as above, and $\rho$ the one-parameter subgroup introduced in (\ref{1PS}). If $a_{i} = 0$, then $k > i$. As $\ell \ge m-i$, we have $k+\ell > m$ contradicting the $\rho$-semistability of $W$. 
Similarly if $b_{m-i} = 0$, then $\ell > m-i$ so $k+\ell > m$, which again contradicts the $\rho$-semistability of $W$. 
We conclude that both $a_i$ and $b_{m-i}$ are nonzero.
An easy calculation now shows
$$
\hspace{-0.1cm}\rho(t)^{-1}\, W=\big\langle y^i \big(a_i  x^{m-i}+\cdots+t^{2(m-i)}a_{m}y^{m-i}\big),\hspace{-0.05cm}  y^{m-i}( b_{m-i}  x^i+\dots+t^{2i}b_m y^i) \big\rangle. 
$$
As $a_i$ and $b_{m-i}$ are nonzero, $\lim_{t \to 0} \rho(t)^{-1}\,W = \langle x^iy^{m-i},x^{m-i}y^i \rangle$. Seeing that the orbit of $\langle x^iy^{m-i},x^{m-i}y^i \rangle$ is closed in $\Gr(2,\oh(\CC^2)_m)^{\ss}$ is straightforward. \end{proof}

\subsection{Preservation of semistability by $\nabla$}

\begin{proposition} \label{P:gradient-ss}  If a form $f \in \PP(\oh(\CC^2)_d)$ is semistable, then so is the subspace $\nabla(f) \in \Gr(2, \oh(\CC^2)_{d-1})$.
\end{proposition}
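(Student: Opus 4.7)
The plan is to prove the contrapositive via Proposition~\ref{P:geometric-characterization}. Suppose $\nabla(f)$ is not semistable. Then there exist a linear form $L$ and integers $0\le i<j$ with $i+j\ge d$ such that $L^i$ divides every $p\in\nabla(f)$ and $L^j$ divides some nonzero element $q=\alpha f_x+\beta f_y$ of $\nabla(f)$ with $(\alpha,\beta)\ne(0,0)$. Since $q$ has degree $d-1$, the condition $L^j\mid q$ forces $j\le d-1$, and combined with $i<j$ and $i+j\ge d$ this yields $i\ge 1$ and $j>d/2$. After choosing coordinates I may assume $L=y$.

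Write $f=\sum_{k=0}^{d} c_k\,x^{d-k}y^k$. The divisibility $y^i\mid f_x$ forces $c_k=0$ for $k=0,\dots,i-1$, while $y^i\mid f_y$ forces $c_k=0$ for $k=1,\dots,i$; together these give $c_0=c_1=\cdots=c_i=0$, so $y^{i+1}\mid f$. Next, expanding
\[
\alpha f_x+\beta f_y=\sum_{k=0}^{d-1}\bigl[\alpha(d-k)c_k+\beta(k+1)c_{k+1}\bigr]x^{d-k-1}y^k,
\]
the condition $y^j\mid q$ becomes $\alpha(d-k)c_k+\beta(k+1)c_{k+1}=0$ for $k=0,\dots,j-1$. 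If $\beta=0$ this directly yields $c_k=0$ for all $k<j$, hence $y^j\mid f$. If $\beta\ne 0$, starting from the established equality $c_i=0$ and applying the relation successively at $k=i,i+1,\dots,j-1$, one obtains by induction $c_{i+1}=c_{i+2}=\cdots=c_j=0$, hence $y^{j+1}\mid f$.

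In either case $y=0$ is a root of $f$ of multiplicity at least $j$. Since $j>d/2$, this contradicts the classical criterion (recalled at the start of Section~\ref{S:geometric-characterization}) that a semistable binary $d$-form has no root of multiplicity exceeding $d/2$. The main work lies in the brief induction in the second paragraph; the real logical backbone is Proposition~\ref{P:geometric-characterization}, which converts the $\rho$-semistability of a subspace into concrete divisibility data, and I do not foresee any serious obstacle. An alternative route avoiding Proposition~\ref{P:geometric-characterization} is to apply the Hilbert--Mumford criterion directly to $\nabla(f)$: after diagonalizing a given one-parameter subgroup $\rho$ and using the basis $(f_x,f_y)$, a short calculation yields the identity $\mu(\nabla(f),\rho)=2\mu(f,\rho)$ whenever $y\mid f$, while in the remaining case $\mu(\nabla(f),\rho)\ge 0$ follows from the linear independence of $f_x$ and $f_y$ (which, for $d\ge 3$ and semistable $f$, holds because $f$ is not a $d$th power of a linear form).
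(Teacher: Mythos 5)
Your argument is correct. It differs from the paper's proof mainly in packaging: the paper fixes a one-parameter subgroup $\rho$, diagonalizes it with $\tau=-1$, writes down the $2\times d$ matrix of coefficients of $(f_x,f_y)$, and reads off $\mu(\nabla(f),\rho)$ from formula \eqref{formulaformu}, concluding that $\rho$-semistability of $f$ forces $\rho$-semistability of $\nabla(f)$ (with $\mu(\nabla(f),\rho)=2\mu(f,\rho)$ whenever the coefficient of $x^d$ vanishes) --- this is precisely the ``alternative route'' you sketch at the end. Your primary route instead takes the contrapositive through Proposition \ref{P:geometric-characterization}, converting instability of $\nabla(f)$ into divisibility data ($y^i$ divides both $f_x$ and $f_y$, and $y^j$ divides some nonzero $\alpha f_x+\beta f_y$, with $i+j\ge d$) and then running an explicit recursion on the coefficients $c_k$ to show that $y=0$ is a root of $f$ of multiplicity exceeding $d/2$. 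The two arguments encode the same coefficient bookkeeping; what your version buys is that it reuses the already-established geometric characterization rather than redoing the Hilbert--Mumford index computation, at the cost of the short induction in the case $\beta\ne 0$. Your preliminary estimates ($j\le d-1$, hence $i\ge 1$ and $j>d/2$) and the recursion itself are all sound, so there is no gap.
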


\begin{proof} Let $\rho \co \CC^* \to \SL_2$ be a one-parameter subgroup and $x,y$ coordinates in $\CC^2$ in which $\rho$ is diagonal with $\tau=-1$.  
We claim that the following stronger statement holds: the $\rho$-semistability of a form $f \in \PP(\oh(\CC^2)_d)$ not linearly equivalent to $x^d$ implies the $\rho$-semistability of $\nabla(f)$.
 Write $f = \sum_{i=0}^d {d \choose i} a_i x^{d-i} y^{i}$
and consider the matrix
\begin{equation}
\begin{pmatrix} 	a_0 & a_1 & a_2 & \cdots & a_{d-1} \\
 				a_1 & a_2 & a_3 & \cdots & a_{d}
\end{pmatrix},\label{matrixn=2}
\end{equation}
with the columns enumerated from $0$ to $d-1$. The columns of this matrix are proportional to those of (\ref{eqn-matrix}) with $m=d-1$, $f_1=f_x$, $f_2=f_y$. Let $i$ be the smallest integer with $a_i \neq 0$.  If $i > 0$, then columns $i$ and $i-1$ are linearly independent, and by formula (\ref{formulaformu}) the subspace $\nabla(f)$ is $\rho$-semistable if and only if $2i \le d$. This last condition is implied by (and in fact equivalent to) the $\rho$-semistability of $f$.
If $i=0$,  then, since matrix (\ref{matrixn=2}) has full rank, the 0th column is linearly independent with the $j$th column for some $j=1, \ldots, d-1$, hence $\nabla(f)$ is $\rho$-semistable.
\end{proof}

\begin{remark} Observe that the converse of the proposition is also true, although we will not use this fact.  Namely, the above proof shows that if $f \in \PP(\oh(\CC^2)_d)$ is not the $d$th power of a linear form, then $f$ is $\rho$-semistable if and only if $\nabla(f)$ is. Note that the converse to Proposition \ref{P:gradient-ss} also follows easily from the fact that the $\SL_2$-linearization of $\oh(k)$ on $\PP(\oh(\CC^2)_d)$ for some $k>0$ is the pullback of the $\SL_2$-linearization of the line bundle corresponding to the Pl\"ucker embedding via the rational map $\nabla\co\PP(\oh(\CC^2)_d) \dashrightarrow \Gr(2, \oh(\CC^2)_{d-1})$.
\end{remark}

Proposition \ref{P:gradient-ss} implies that the morphism $\nabla\co\PP(\oh(\CC^2)_d)_{\Delta} \to \Gr(2, \oh(\CC^2)_{d-1})_{\Res}$ extends to a morphism (which we denote by the same symbol) $\nabla\co\PP(\oh(\CC^2)_d)^{\ss} \to \Gr(2, \oh(\CC^2)_{d-1})^{\ss}$, so the morphism $\bar{\nabla}\co\PP(\oh(\CC^2)_d)_{\Delta} \gitq \SL_2 \to \Gr(2, \oh(\CC^2)_{d-1})_{\Res}$ of affine GIT quotients extends to the induced morphism $\bar{\nabla} \co \PP(\oh(\CC^2)_d)^{\ss} \gitq \SL_2 \to \Gr(2, \oh(\CC^2)_{d-1})^{\ss} \gitq \SL_2$ of projective GIT quotients.

\subsection{Preservation of polystability by $\nabla$} 

\begin{proposition}  \label{P:gradient-ps}
If a form $f \in \PP(\oh(\CC^2)_d)$ is polystable, then so is the subspace $\nabla(f) \in \Gr(2, \oh(\CC^2)_{d-1})$.
\end{proposition}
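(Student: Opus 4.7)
I would argue by contradiction. Suppose $f \in \PP(\oh(\CC^2)_d)$ is polystable but $W := \nabla(f)$ is not. By Proposition \ref{P:gradient-ss}, $W$ is semistable, so the Hilbert-Mumford criterion gives a one-parameter subgroup $\rho \co \CC^* \to \SL_2$ such that $W_0 := \lim_{t\to 0}\rho(t)\cdot W$ lies in $\Gr(2,\oh(\CC^2)_{d-1})^{\ss}$ but outside $\SL_2 \cdot W$. By $\SL_2$-equivariance of $\nabla$, $W_0 = \lim_{t\to 0}\nabla(\rho(t)\cdot f)$, and by properness of $\PP(\oh(\CC^2)_d)$ the limit $f_0 := \lim_{t\to 0}\rho(t)\cdot [f]$ also exists. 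My plan is to split into three cases according to the nature of $f_0$ and derive a contradiction in each.

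The two easy cases exploit continuity of $\nabla$ on its domain of definition (the complement of $d$th powers of linear forms). If $f_0 \in \SL_2\cdot [f]$, then $f_0$ is semistable, $\nabla$ is defined at $f_0$, and continuity gives $W_0 = \nabla(f_0) \in \SL_2\cdot W$, contradicting the choice of $\rho$. Otherwise, polystability of $f$ forces $f_0$ to be unstable; if additionally $f_0$ is not a $d$th power of a linear form, then $\nabla(f_0) = W_0$ is defined, and the converse of Proposition \ref{P:gradient-ss} noted in the remark makes $\nabla(f_0)$ unstable, contradicting semistability of $W_0$.

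The remaining, and main, difficulty is when $f_0$ is a $d$th power of a linear form, since $\nabla$ is no longer defined at $f_0$. After an $\SL_2$-action I would reduce to $\rho(t) = \mathrm{diag}(t^a, t^{-a})$ with $a > 0$ and $f_0 = [x^d]$, so the coefficient $a_0$ of $x^d$ in $f = \sum_{k=0}^d a_k \binom{d}{k} x^{d-k} y^k$ is nonzero. Since $W_0$ is $\rho$-fixed and two-dimensional, $W_0 = \langle x^{d-1-k_1} y^{k_1}, x^{d-1-k_2} y^{k_2}\rangle$ for some $0 \le k_1 < k_2 \le d-1$; a direct application of Proposition \ref{P:geometric-characterization} shows $W_0$ is semistable if and only if $k_1 + k_2 = d-1$. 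Reading off the Pl\"ucker coordinates of $\nabla(f)$ in terms of the $2 \times 2$ minors $c_{ij} := a_i a_{j+1} - a_j a_{i+1}$ and tracking their $t$-weights under $\rho$, one sees that $k_1 + k_2$ equals $\min\{i+j : c_{ij} \ne 0\}$; semistability of $W_0$ thus forces $c_{ij} = 0$ for all $i + j < d - 1$. Specializing to $i = 0$, these vanishings together with $a_0 \ne 0$ force $(a_0, \ldots, a_{d-1})$ to be a geometric progression with some ratio $r$, yielding $f = a_0(x+ry)^d + (a_d - a_0 r^d) y^d$. Semistability of $f$ forbids $a_d = a_0 r^d$ (else $f$ would be a $d$th power), and a direct computation then gives $\nabla(f) = \langle (x+ry)^{d-1}, y^{d-1}\rangle$, which lies in the closed orbit $\SL_2 \cdot \langle x^{d-1}, y^{d-1}\rangle$ described in Proposition \ref{P:geometric-characterization}. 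Hence $\nabla(f)$ is polystable, contradicting our initial assumption and completing the proof.
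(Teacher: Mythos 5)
Your argument is correct, but it is organized rather differently from the paper's, so a comparison is worthwhile. The paper stays entirely inside the numerical Hilbert--Mumford framework: assuming $\nabla(f)$ is strictly semistable, it picks coordinates with $\mu(\nabla(f),\rho)=0$ and reads off from the column analysis of the matrix (\ref{matrixn=2}) that either $k=0$, forcing $f\sim x^d+y^d$, or $k=d/2-1$, forcing (via polystability of $f$) $f\sim x^{d/2}y^{d/2}$; in both cases $\nabla(f)$ visibly lies in a closed orbit. You instead run a contradiction through the limit point $f_0=\lim_{t\to 0}\rho(t)\cdot[f]$, which requires the stronger GIT input that a semistable point with non-closed orbit admits a one-parameter subgroup whose limit is semistable and escapes the orbit --- this is the Kempf/Birkes closed-orbit statement rather than the numerical criterion of \cite[Thm.~2.1]{git}, and you should cite it as such. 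Granting that, your trichotomy is exhaustive and each branch is sound: cases 1 and 2 are handled cleanly by continuity of $\nabla$ off the locus of $d$th powers together with the converse remark to Proposition \ref{P:gradient-ss}, and your case 3 reproduces the computational heart of the paper's $k=0$ case --- the vanishing of the Hankel minors $c_{0j}$ for $j\le d-2$ forces $(a_0,\dots,a_{d-1})$ to be geometric, whence $f=a_0(x+ry)^d+(a_d-a_0r^d)y^d$ and $\nabla(f)=\langle (x+ry)^{d-1},y^{d-1}\rangle$ is polystable by Proposition \ref{P:geometric-characterization}. What your route buys is a conceptual account of exactly how polystability of $\nabla(f)$ could fail (only through the limit of $f$ itself degenerating to a $d$th power), at the cost of invoking Proposition \ref{P:geometric-characterization} and the closed-orbit form of the Hilbert--Mumford theory; the paper's route is more economical, needing only the index computation on the single matrix already set up for Proposition \ref{P:gradient-ss}, and as a by-product it exhibits the complete list of polystable forms with strictly semistable gradient, namely $x^d+y^d$ and $x^{d/2}y^{d/2}$.
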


\begin{proof} 
By Proposition \ref{P:gradient-ss}, we know that $\nabla(f)$ is semistable.  Assume that $\nabla(f)$ is strictly semistable. Then by the Hilbert-Mumford criterion there exists a one-parameter subgroup $\rho \co\CC^* \to \SL_2$ with $\mu(\nabla(f), \rho) = 0$ and a choice of coordinates $x, y$ with $\tau=-1$. Write $f = \sum_{i=0}^d {d \choose i} a_i x^{d-i} y^{i}$ and consider matrix (\ref{matrixn=2}). Let $k$ be the smallest integer such that the $k$th column of the matrix is nonzero and $\ell$ the smallest integer such that the $\ell$th column is linearly independent with the $k$th column. Then by formula (\ref{formulaformu}) with $m=d-1$ we have $k+\ell = d-1$.  

If $k=0$, then $\ell=d-1$, and by a short computation one verifies that $f$ is linearly equivalent to $x^d + y^d$. In this case, $\nabla(f)$ lies in the orbit of $\langle x^{d-1}, y^{d-1} \rangle$, which is polystable. If $k \neq 0$, then $k<d-1$ and the $k$th column is $\begin{pmatrix} 0 \\ a_{k+1} \end{pmatrix}$ with $a_{k+1} \neq 0$. Then the $(k+1)$th column $\begin{pmatrix} a_{k+1} \\ a_{k+2} \end{pmatrix}$ is linearly independent with the $k$th column.  Hence, $\ell=k+1$, which implies that $d$ is even and $k = d/2-1$. It then follows that $y^{d/2}$ divides $f$, i.e., $f$ is strictly semistable. Since $f$ is polystable, it is then linearly equivalent to $x^{d/2}y^{d/2}$. Therefore, $\nabla(f)$ lies in the orbit of the subspace $\langle x^{d/2-1}y^{d/2}, x^{d/2} y^{d/2-1} \rangle$, which is again polystable.\end{proof}

\begin{cor} \label{C:finite-injective} \it  The morphism
$\bar{\nabla} \co \PP(\oh(\CC^2)_d)^{\ss} \gitq \SL_2 \to \Gr(2, \oh(\CC^2)_{d-1})^{\ss} \gitq \SL_2$ is finite and injective.
\end{cor}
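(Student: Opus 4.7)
The plan has two ingredients: injectivity, established by orbit analysis, and finiteness, which will follow formally from injectivity and properness. By Propositions \ref{P:gradient-ss} and \ref{P:gradient-ps}, the morphism $\nabla$ preserves semistability and polystability, so $\bar{\nabla}$ is a well-defined morphism of projective GIT quotients, and in particular is proper.

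For injectivity, suppose polystable representatives $f_1, f_2 \in \PP(\oh(\CC^2)_d)^{\ps}$ satisfy $\bar{\nabla}([f_1]) = \bar{\nabla}([f_2])$. By Proposition \ref{P:gradient-ps}, $\nabla(f_1)$ and $\nabla(f_2)$ are polystable and lie in a common $\SL_2$-orbit, so after translating $f_1$ by an element of $\SL_2$ we may assume $\nabla(f_1) = \nabla(f_2) = W$ for a polystable subspace $W = \langle p,q\rangle \subseteq \oh(\CC^2)_{d-1}$. The key reduction is Euler's identity $d\cdot f_i = x(f_i)_x + y(f_i)_y$, which recovers $f_i$ uniquely from the pair $((f_i)_x,(f_i)_y) \in W\oplus W$; the pair in turn is subject to the compatibility relation $(f_i)_{xy} = (f_i)_{yx}$, which upon writing $(f_i)_x = a_ip+b_iq$, $(f_i)_y = c_ip+d_iq$ becomes the single linear identity $a_ip_y+b_iq_y = c_ip_x + d_iq_x$ in $\oh(\CC^2)_{d-2}$. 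I would then combine this linear system with the classification of polystable $W$ from Proposition \ref{P:geometric-characterization}, analyzing the two possible cases. For strictly polystable $W$, which by Proposition \ref{P:geometric-characterization} is $\SL_2$-equivalent to $\langle x^{i_0}y^{d-1-i_0}, x^{d-1-i_0}y^{i_0}\rangle$ for some $0\le i_0<(d-1)/2$, an explicit computation (extending Example \ref{E:example1}) shows that the compatibility linear system cuts out a one-parameter pencil of forms $f$, and the torus inside $\text{Stab}(W)$ acts transitively on the polystable locus of this pencil. For stable $W$, the stabilizer is finite, and a count combining the $d-1$ compatibility equations in the $4$ unknowns with the polystability constraint on $f$ forces the polystable fiber of $\nabla$ over $W$ to be a single scalar line. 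In either case, $f_1$ and $f_2$ lie in the same $\text{Stab}(W)$-orbit, hence in the same $\SL_2$-orbit.

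With injectivity of $\bar{\nabla}$ on closed points thereby established, finiteness is automatic: a proper morphism of varieties with (set-theoretically) finite fibers is quasi-finite, and a proper quasi-finite morphism is finite. Since each fiber of $\bar{\nabla}$ consists of a single point, $\bar{\nabla}$ is quasi-finite and hence finite.

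I expect the main obstacle to be the stable case of the injectivity analysis. The compatibility linear system has expected kernel dimension $5-d$, which is $\le 0$ for $d\ge 5$; but for $W$ in the image of $\nabla$ the kernel has dimension at least $1$ (to accommodate the scaling of $f$), and on certain special stable subspaces $W$ the kernel could a priori be larger. Ruling out such higher-dimensional polystable fibers requires a careful combination of the compatibility equations with the Hilbert--Mumford semistability criterion for binary forms.
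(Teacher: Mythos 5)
Your overall strategy coincides with the paper's: pass to polystable representatives via Proposition \ref{P:gradient-ps}, show that two polystable forms whose gradient subspaces lie in one $\SL_2$-orbit are themselves linearly equivalent, and then get finiteness for free from injectivity plus properness (the paper phrases this last step as ``an injective morphism of projective varieties is finite'', which is exactly your proper-plus-quasi-finite observation). Your analysis of the strictly polystable case is also sound: for $W=\langle x^{d-1},y^{d-1}\rangle$ the integrability condition $f_{xy}=f_{yx}$ forces $f$ into the pencil $\alpha x^d+\beta y^d$, whose polystable locus is a single orbit of the diagonal torus, and for the remaining strictly polystable subspaces $\langle x^{i_0}y^{d-1-i_0},x^{d-1-i_0}y^{i_0}\rangle$ the fiber of $\nabla$ is empty or a single point.

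The stable case, however, is a genuine gap, and you have flagged it without closing it. What you need there is precisely that the kernel of your compatibility system is one-dimensional whenever $W$ is stable and in the image of $\nabla$; equivalently, that no $g$ not proportional to $f$ satisfies $\langle g_x,g_y\rangle\subseteq\langle f_x,f_y\rangle$ unless $f$ is linearly equivalent to $x^d+y^d$ or $x^{d-1}y$ (this is the content of Lemma \ref{P:binary-linearly-equivalent}). The expected-dimension count $5-d$ gives no a priori control: the actual kernel is always at least one-dimensional, and nothing in your sketch rules out a jump on special stable $W$; asserting that this ``requires a careful combination of the compatibility equations with the Hilbert--Mumford criterion'' is a statement of the problem, not a solution. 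The paper sidesteps the computation entirely by invoking Donagi's Torelli-type result \cite[Prop.~1.1]{D}, which says that (non-cone) forms with the same Jacobian ideal are linearly equivalent, applied to the polystable representatives. To complete your argument you must either cite such a result or actually prove the one-dimensionality of the kernel for stable $W$; as written, the central step of the injectivity claim is missing.
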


\begin{proof}
Suppose that $\bar{f}, \bar{g} \in \PP(\oh(\CC^2)_d)^{\ss} \gitq \SL_2$ are such that $\bar{\nabla}(\bar{f}) = \bar{\nabla}(\bar{g})$.  Let $f, g \in \PP(\oh(\CC^2)_d)^{\ss}$ be any polystable preimages of $\bar{f}, \bar{g}$.  By the proposition, the subspaces $\nabla(f), \nabla(g) \in \Gr(2, \oh(\CC^2)_{d-1})$ are polystable hence linearly equivalent (as they map to same point in $\Gr(2, \oh(\CC^2)_{d-1})^{\ss} \gitq \SL_2$). Then, by \cite[Prop.~1.1]{D} the forms $f, g$ are linearly equivalent. Thus 
$\bar{f} = \bar{g}$, and we have established that $\bar{\nabla}$ is injective.  Since $\bar{\nabla}$ is an injective morphism of projective varieties, is it also finite.
\end{proof}

To prove Theorem \ref{T:binary}, it therefore suffices to show that the (closed) image of $\bar{\nabla}$ is normal. This is accomplished in the next section.

\section{Normality of the image of $\bar{\nabla}$ and the proof of Theorem \ref{T:binary}}\label{propgamm}
\setcounter{equation}{0}

In this section, we finalize the proof of Theorem \ref{T:binary} by showing that the image $\bar{\nabla} (\PP(\oh(\CC^2)_d)^{\ss} \gitq \SL_2)$ of $\bar{\nabla}$ is normal  (Corollary \ref{C:normal}).

\subsection{The injectivity of the differential of $\nabla$ and the locus where $\nabla$ is a closed immersion} \label{S:differential} As the morphism $\nabla \co \PP(\oh(\CC^2)_d) \setminus \SL_2 x^d \to \Gr(2, \oh(\CC^2)_{d-1})$
is induced by the linear map $\grad\co \oh(\CC^2)_d \to \oh(\CC^2)_{d-1}^{\oplus 2}$, it is easy to compute its differential. Indeed, any element $f \in \PP(\oh(\CC^2)_d)$ can be regarded as a line in $\oh(\CC^2)_d$, and there is a natural identification 
$$
T_f \PP(\oh(\CC^2)_d) = \Hom (f, \oh(\CC^2)_d /f) = \oh(\CC^2)_d / f.
$$
Likewise, for $W\in\Gr(2, \oh(\CC^2)_{d-1})$, there is a natural identification
$$
T_{W} \Gr(2, \oh(\CC^2)_{d-1}) = \Hom
(W, \oh(\CC^2)_{d-1}/ W).
$$
Let $f \in \PP(\oh(\CC^2)_d) \setminus \SL_2  x^d$.
The differential $D_f \nabla$ of $\nabla$  is described, using the above identifications, as
$$\begin{aligned}
D_f\nabla \co \oh(\CC^2)_d / f  & \to  \Hom
\big(\langle f_x,f_y\rangle, \oh(\CC^2)_{d-1}/\langle f_x,f_y\rangle \big) \\
g & \mapsto \chi,\, \hbox{with $\chi(\alpha f_x + \beta f_y) = \alpha g_x + \beta g_y$},
\end{aligned}
$$
where $g_x, g_y$ are interpreted as the images in $\oh(\CC^2)_{d-1}/\langle f_x,f_y\rangle$ of the first-order partial derivatives of a lift of $g$ to $\oh(\CC^2)_d$. We conclude that the differential $D_f\nabla$ is injective if and only if there does not exist $g \in\PP(\oh(\CC^2)_d)$, $g\ne f$, such that $\langle g_x,g_y\rangle \subseteq \langle f_x,f_y\rangle$.

We now need the following elementary lemma, which we state without proof.

\begin{lemma} \label{P:binary-linearly-equivalent} \it Let $f \in \oh(\CC^2)_d$ be a form not linearly equivalent to $x^{d-1}y$. If $\langle f_x,f_y\rangle \supseteq\langle g_x,g_y\rangle$ for some $g\in \oh(\CC^2)_d$ not proportional to $f$, then $f$ is linearly equivalent to $x^d + y^d$.
\end{lemma}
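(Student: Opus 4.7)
The plan is to convert the subspace inclusion $\langle g_x, g_y\rangle \subseteq \langle f_x, f_y\rangle$ into a constant-coefficient second-order PDE satisfied by $f$, and then normalize that PDE via the $\GL_2$-action on $\CC^2$. Writing $g_x = \alpha_1 f_x + \beta_1 f_y$ and $g_y = \alpha_2 f_x + \beta_2 f_y$ for scalars $\alpha_i,\beta_i$, the compatibility $g_{xy}=g_{yx}$ yields
\[
\alpha_2\, f_{xx} + (\beta_2 - \alpha_1)\, f_{xy} - \beta_1\, f_{yy} = 0.
\]
If all three coefficients vanish, the matrix $\bigl(\begin{smallmatrix}\alpha_1 & \beta_1 \\ \alpha_2 & \beta_2\end{smallmatrix}\bigr)$ is scalar, and Euler's identity $d g = x g_x + y g_y$ then forces $g$ to be proportional to $f$, contradicting the hypothesis. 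So $f$ satisfies a \textit{nonzero} PDE $P(\partial_x,\partial_y) f = 0$, where $P(X,Y) = aX^2 + bXY + cY^2$.

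Next I would split into cases according to the discriminant of $P$. If $P$ has two distinct linear factors, an appropriate linear change of coordinates makes $P$ proportional to $XY$, so in the new coordinates $f_{xy}=0$; homogeneity of degree $d$ then forces $f = \alpha x^d + \beta y^d$. When both $\alpha,\beta$ are nonzero, a diagonal rescaling produces $x^d + y^d$, which is the desired conclusion. If instead $P$ is a perfect square, a linear change of coordinates makes $P$ proportional to $X^2$, so $f_{xx}=0$ and homogeneity gives $f = y^{d-1}(\alpha x + \beta y)$; when $\alpha\ne 0$, a further linear change of coordinates brings $f$ to $xy^{d-1}$ (hence to $x^{d-1}y$), which is excluded by hypothesis.

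The only remaining, degenerate sub-cases in the above analysis are those in which $f$ turns out to be a $d$-th power of a linear form $L$. Here $\langle f_x, f_y\rangle = \CC L^{d-1}$ is one-dimensional, so both $g_x$ and $g_y$ must be scalar multiples of $L^{d-1}$, and a short computation combining Euler's identity with $g_{xy}=g_{yx}$ shows that $g$ is necessarily proportional to $L^d = f$, again contradicting the hypothesis. The only genuine bookkeeping item is disposing of these degenerate branches cleanly; the core argument is nothing more than the standard $\GL_2$-normal form for binary quadratics applied to the symbol of the PDE, and I foresee no substantive technical obstacle.
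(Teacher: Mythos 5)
Your proof is correct and complete. There is nothing in the paper to compare it against: the authors explicitly state this lemma ``without proof,'' so your argument supplies the missing details. The route you take --- expressing $g_x,g_y$ in terms of $f_x,f_y$, extracting the constant-coefficient second-order equation $\alpha_2 f_{xx}+(\beta_2-\alpha_1)f_{xy}-\beta_1 f_{yy}=0$ from $g_{xy}=g_{yx}$, and normalizing its symbol to $XY$ or $X^2$ under $\GL_2$ --- is exactly the natural one for this ``elementary lemma,'' and you correctly dispose of the two boundary situations (scalar coefficient matrix via Euler's identity, and $f=L^d$, where the non-uniqueness of the coefficients is irrelevant because that case is handled directly).
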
 

\noindent Using Lemma \ref{P:binary-linearly-equivalent} we obtain:

\begin{proposition} \label{P:unramified}
If $f \in \PP(\oh(\CC^2)_d)^{\ss}$ is not in\, $\SL_2(x^d + y^d)$, then $D_f\nabla$ is injective.
\end{proposition}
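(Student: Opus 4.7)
The plan is to combine the description of $\ker(D_f\nabla)$ from Section \ref{S:differential} with Lemma \ref{P:binary-linearly-equivalent} and a short semistability check that rules out $x^{d-1}y$.

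More precisely, by the explicit description of $D_f\nabla$ established just before the statement, injectivity of $D_f\nabla$ at $f$ is equivalent to the nonexistence of a form $g \in \oh(\CC^2)_d$, not proportional to $f$, such that $\langle g_x, g_y\rangle \subseteq \langle f_x, f_y\rangle$. So the strategy is to assume for contradiction that such a $g$ exists and then apply Lemma \ref{P:binary-linearly-equivalent}, which forces $f$ to be linearly equivalent either to $x^{d-1}y$ or to $x^d + y^d$. The second possibility is excluded directly by the hypothesis $f \notin \SL_2(x^d+y^d)$, so the only remaining possibility is $f \sim x^{d-1}y$.

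To rule this out, I would invoke the standard geometric characterization of semistable binary forms recalled at the start of Section \ref{S:geometric-characterization}: a form in $\PP(\oh(\CC^2)_d)$ is semistable if and only if no root has multiplicity strictly greater than $d/2$. Since $d \ge 3$, the form $x^{d-1}y$ has a root of multiplicity $d-1 > d/2$, so it is unstable; hence its entire $\SL_2$-orbit consists of unstable forms. This contradicts the semistability of $f$, completing the argument.

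The proof is essentially immediate once Lemma \ref{P:binary-linearly-equivalent} is in hand, so there is no real obstacle at this step; the content has been packaged into the preceding differential computation and the (unproved) lemma. The only minor point to watch is that the hypothesis ``$f \in \PP(\oh(\CC^2)_d)^{\ss}$'' is used in two distinct ways — once to exclude $x^{d-1}y$, and implicitly via the setup that $\nabla$ is defined on $f$ (which requires only that $f$ is not a power of a linear form, automatic from semistability for $d \ge 3$).
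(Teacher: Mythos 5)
Your argument is correct and is exactly the paper's (largely implicit) proof: the paper likewise reduces injectivity of $D_f\nabla$ to the nonexistence of a non-proportional $g$ with $\langle g_x,g_y\rangle\subseteq\langle f_x,f_y\rangle$, applies Lemma \ref{P:binary-linearly-equivalent}, and excludes $x^{d-1}y$ because it is unstable (root of multiplicity $d-1>d/2$) and $x^d+y^d$ by hypothesis. Nothing is missing.
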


Next, consider the commutative diagram:
$$
\xymatrix{
\PP(\oh(\CC^2)_d)^{\ss} \ar[r]^{\hspace{-0.8cm}{\nabla}} \ar[d]_{\pi_1} 		& \Gr(2, \oh(\CC^2)_{d-1})^{\ss} \ar[d]^{\pi_2} \\
\PP(\oh(\CC^2)_d)^{\ss} \gitq \SL_2	\ar[r]^{\hspace{-0.8cm}{\bar{\nabla}}}			& \Gr(2, \oh(\CC^2)_{d-1})^{\ss} \gitq \SL_2.
}
$$
The morphism $\nabla$ is not a closed immersion as it is not even injective---all the forms $ax^d + b y^d$ have the same image under $\nabla$ for $a,b \neq 0$.  The following proposition says that it is in fact a closed immersion once we remove the orbit $\SL_2  (x^d+y^d)$ in the source and the  locus $\pi_2^{-1}(\pi_2( \langle x^{d-1}, y^{d-1}\rangle))$ in the target. Note that we have $\nabla^{-1}\big(\pi_2^{-1}(\pi_2( \langle x^{d-1}, y^{d-1}\rangle))\big)=\SL_2  (x^d+y^d)$ by Lemma \ref{P:binary-linearly-equivalent} and Remark \ref{R:codim1} below.

\begin{proposition}\label{P:generically-closed}
The morphism 
\begin{equation} \label{E:restriction}
\PP(\oh(\CC^2)_d)^{\ss} \setminus \SL_2  (x^d+y^d) \to \Gr(2, \oh(\CC^2)_{d-1})^{\ss} \setminus \pi_2^{-1}(\pi_2( \langle x^{d-1}, y^{d-1}\rangle))
\end{equation}
obtained by restricting $\nabla$ is a closed immersion. 
\end{proposition}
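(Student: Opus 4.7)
The strategy is to verify that $\varphi := \nabla|_{X'}$, with $X' := \PP(\oh(\CC^2)_d)^{\ss}\setminus \SL_2(x^d+y^d)$ and $Y' := \Gr(2,\oh(\CC^2)_{d-1})^{\ss}\setminus \pi_2^{-1}(\pi_2(\langle x^{d-1},y^{d-1}\rangle))$, is injective on closed points, is unramified, and is proper. These three properties together imply $\varphi$ is a closed immersion: proper quasi-finite morphisms are finite by Zariski's main theorem, and a finite injective unramified morphism of varieties over $\CC$ is a closed immersion by a direct application of Nakayama's lemma to the local rings.

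Injectivity on closed points is immediate from Lemma \ref{P:binary-linearly-equivalent}: if $\nabla(f)=\nabla(g)$ with $f,g\in X'$, then semistability of $f$ excludes $f\in \SL_2\cdot x^{d-1}y$ (unstable for $d\ge 3$), so the lemma forces $g$ to be proportional to $f$, the alternative $f\in \SL_2(x^d+y^d)$ being incompatible with $f\in X'$. Injectivity of the differential at every $f\in X'$ is exactly the statement of Proposition \ref{P:unramified}.

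The main work lies in verifying properness via the valuative criterion. Given a DVR $R$ with fraction field $K$ and morphisms $\alpha\co \Spec R\to Y'$, $\beta\co \Spec K\to X'$ with $\varphi\circ\beta=\alpha|_{\Spec K}$, projectivity of $\PP(\oh(\CC^2)_d)$ gives a unique extension $\tilde\beta\co \Spec R\to \PP(\oh(\CC^2)_d)$ of $\beta$, and the task is to show the image $g_0$ of the closed point lies in $X'$. First I rule out $g_0\in \SL_2\cdot x^d$: after $\SL_2$-equivariance we may assume $g_0=x^d$ and write a representative $\tilde\beta=x^d+\pi h$ for a uniformizer $\pi$ and $h\in \oh(\CC^2)_d(R)$. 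A direct Pl\"ucker-coordinate calculation on $(\tilde\beta)_x\wedge(\tilde\beta)_y$---factoring out appropriate powers of $\pi$ and, whenever the leading term degenerates, iteratively subtracting multiples of $(\tilde\beta)_x$ from $(\tilde\beta)_y$ to produce a nonzero limit---shows that the limit $W_0=\alpha(\bar 0)\in \Gr(2,\oh(\CC^2)_{d-1})$ always takes the form $\langle x^{d-1},v\rangle$ for some $v$. The recursion terminates because $(\tilde\beta)_x,(\tilde\beta)_y$ remain $K$-linearly independent throughout. But then $x^{d-1}\in W_0$, and Proposition \ref{P:geometric-characterization} (applied with $L=x$, $i=0$, $j=d-1$) implies that a semistable subspace containing $x^{d-1}$ is strictly semistable with unique closed orbit $\SL_2\cdot\langle x^{d-1},y^{d-1}\rangle$, contradicting $W_0\in Y'$. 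Thus $g_0\notin \SL_2\cdot x^d$, so $\nabla$ is defined at $g_0$; by uniqueness of extensions into the projective Grassmannian, $\nabla(\tilde\beta)=\alpha$, hence $\nabla(g_0)=\alpha(\bar 0)\in Y'$. Semistability of $\nabla(g_0)$ forces semistability of $g_0$ by the converse of Proposition \ref{P:gradient-ss} (remark after its proof), and $g_0\notin \SL_2(x^d+y^d)$ since $\nabla^{-1}(\pi_2^{-1}(\pi_2(\langle x^{d-1},y^{d-1}\rangle)))=\SL_2(x^d+y^d)$ as noted just before the proposition.

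The main obstacle is the Pl\"ucker computation ruling out $g_0\in \SL_2\cdot x^d$: one must carefully manage the $\pi$-adic expansion to ensure that, after extracting leading terms and recursively subtracting multiples of $(\tilde\beta)_x$ when the expected limit is a multiple of $x^{d-1}$, the limiting subspace $W_0$ always contains $x^{d-1}$ as a generator. Geometrically, this encodes the fact that in any degeneration of a binary form toward a pure $d$-th power of a linear form, the gradient subspace is pushed into the closed $\SL_2$-orbit $\SL_2\cdot\langle x^{d-1},y^{d-1}\rangle$, which is precisely what was excised from $Y'$.
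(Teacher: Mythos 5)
Your proof is correct, but it reaches finiteness of \eqref{E:restriction} by a genuinely different mechanism than the paper. The paper descends to the GIT quotients: the induced map of quotients is finite by Corollary \ref{C:finite-injective}, the morphism upstairs is injective and carries closed orbits to closed orbits (Lemma \ref{P:binary-linearly-equivalent} and Proposition \ref{P:gradient-ps}), and a lemma of Luna then lifts finiteness from the quotients back to \eqref{E:restriction}; the conclusion follows, exactly as in your last step, from unramifiedness via Proposition \ref{P:unramified}. You instead prove properness directly by the valuative criterion, extending the family into the projective space $\PP(\oh(\CC^2)_d)$ and showing the special fibre cannot lie in $\SL_2\, x^d$: if it did, your $\pi$-adic Pl\"ucker computation forces the limiting subspace to contain $x^{d-1}$, hence by Proposition \ref{P:geometric-characterization} (with $i=0$) to be non-stable with closed orbit $\SL_2 \langle x^{d-1},y^{d-1}\rangle$, i.e.\ to lie in the locus excised from the target --- a contradiction; semistability of the limit form then comes from the converse direction of Proposition \ref{P:gradient-ss} recorded in the remark following it. This trades the appeal to Luna's descent lemma, Corollary \ref{C:finite-injective} and Proposition \ref{P:gradient-ps} for an explicit limit computation, which you only sketch but whose termination is correctly guaranteed by the $K$-linear independence of the two partial derivatives (equivalently, the finiteness of the valuation of $F_x\wedge F_y$). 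Both routes conclude identically from ``finite $+$ injective $+$ unramified implies closed immersion.'' Your version is more self-contained and makes visible the geometric reason the orbit of $\langle x^{d-1},y^{d-1}\rangle$ must be removed; the paper's is shorter but leans on the cited general result about equivariant morphisms and good quotients.
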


\begin{proof} 
Morphism \eqref{E:restriction} induces the morphism 
\begin{equation} \label{E:quotient-restriction}
\begin{array}{l}
\big(\PP(\oh(\CC^2)_d)^{\ss} \gitq \SL_2 \big) \setminus \pi_1(x^d+y^d)	\to\\
\vspace{-0.3cm}\\
 \hspace{4cm}\big(\Gr(2, \oh(\CC^2)_{d-1})^{\ss} \gitq \SL_2 \big) \setminus \pi_2(\langle x^{d-1}, y^{d-1}\rangle)
 \end{array}
\end{equation}
of GIT quotients. Morphism \eqref{E:quotient-restriction} is finite by Corollary \ref{C:finite-injective} whereas morphism \eqref{E:restriction} is injective by Lemma \ref{P:binary-linearly-equivalent} and maps closed orbits to closed orbits by Proposition \ref{P:gradient-ps}. Therefore, morphism \eqref{E:restriction} is finite (cf.~\cite[p.~89]{luna}).  It follows that \eqref{E:restriction}  is a closed immersion since by Proposition \ref{P:unramified} it is unramified.
\end{proof}

\begin{remark} \label{R:codim1}
We note that $\pi_2^{-1} (\pi_2 ( \langle x^{d-1}, y^{d-1} \rangle))\subseteq\Gr(2, \oh(\CC^2)_{d-1})^{\ss}$ consists of all subspaces $\langle L^{d-1}, f \rangle$ where $L\ne 0$ is a linear form and $f\ne 0$ is a form not divisible by $L$ (cf.~Proposition \ref{P:geometric-characterization}). Therefore, $\pi_2^{-1}(\pi_2 ( \langle x^{d-1}, y^{d-1} \rangle))$ is the image of the rational map $\oh(\CC^2)_1 \times \oh(\CC^2)_{d-1} \dashrightarrow \Gr(2,\oh(\CC^2)_{d-1})$ defined by $(L, f) \mapsto \langle L^{d-1}, f \rangle$. Hence, this subvariety is irreducible of dimension $d-1$. 
\end{remark}

\subsection{Normality of the image of $\nabla$} \label{S:normality}
We assume that $d \ge 4$.  For $d=3$, Theorem \ref{T:binary} (as well as Corollary \ref{C:normal} and Proposition \ref{C:closed} below) is trivial. 

Recall that the gradient morphism $\grad \co \oh(\CC^2)_d \to \oh(\CC^2)_{d-1}^{\oplus 2}$ is given by $f \mapsto (f_x, f_y)$.  Consider the action of $\GL_2$ on $\oh(\CC^2)_{d-1}^{\oplus 2}$ defined by $g (f_1, f_2) = (f_1,f_2) \cdot g^{-1}$ for $g \in \GL_2$ and  $f_1,f_2 \in \oh(\CC^2)_{d-1}$ (note that this action is the restriction of that in (\ref{doubleaction}) to the subgroup $\{{\rm id}\}\times\GL_2\subseteq\GL_2\times\GL_2$). Introduce the irreducible variety
$$ \label{E:Wd}
W_d = \bar{ \GL_2  \grad(\oh(\CC^2)_d )} \subseteq \oh(\CC^2)_{d-1}^{\oplus 2}.
$$

Recall also that we have the commutative diagram
$$
\xymatrix{
\oh(\CC^2)_{d}  \setminus \overline{\GL_2 x^d} \ar[r]^{\hspace{-2.7cm}\grad} \ar[d]		&\oh(\CC^2)_{d-1}^{\oplus 2} \setminus  \{ (f_1, f_2) \mid f_1, f_2 \text{ are linearly dependent}\}  \ar[d]^{\pi} \\
\PP(\oh(\CC^2)_d) \setminus \SL_2 x^d \ar[r]^{\hspace{0.1cm}\nabla}				& \Gr(2,\oh(\CC^2)_{d-1}),
}
$$
where
$\pi$ is a $\GL_2$-quotient.  Clearly, $W_d$ coincides with the closure in $\oh(\CC^2)_{d-1}^{\oplus 2}$ of $\pi^{-1} \big(\nabla(\PP(\oh(\CC^2)_d) \setminus \SL_2  x^d)\big)$.

\begin{prop} \label{P:normal} \it
The variety $W_d$ is normal.
\end{prop}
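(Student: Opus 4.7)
The plan is to apply Serre's normality criterion: since $W_d$ is an affine variety, it is normal if and only if it is regular in codimension one ($R_1$) and satisfies Serre's condition ($S_2$).

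For $(R_1)$, observe that $W_d$ is $\GL_2$-invariant, so its singular locus is a $\GL_2$-invariant closed subvariety and hence a finite union of $\GL_2$-orbit closures. On the open subset of $W_d$ consisting of pairs $\grad(h) \cdot g^{-1}$ for $h$ not a $d$-th power of a linear form, Lemma \ref{P:binary-linearly-equivalent} together with the analysis underlying Proposition \ref{P:generically-closed} shows that $W_d$ is locally a smooth fiber bundle (essentially a $\GL_2$-torsor over an open subset of the image of $\nabla$), and hence smooth there. The complement is a union of low-dimensional $\GL_2$-orbit closures, most notably the closure of the orbit of $\grad(x^d) = (d x^{d-1}, 0)$ and its $\GL_2$-translates; a direct dimension count should show these lie in codimension at least $2$ in $W_d$.

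For $(S_2)$, the plan is to exhibit $W_d$ as a GIT quotient of a smooth affine variety by a reductive group, from which Cohen-Macaulayness (and hence $(S_2)$) follows automatically. The natural candidate is the morphism
\[
\Phi \co X := \oh(\CC^2)_d \times \GL_2 \to \oh(\CC^2)_{d-1}^{\oplus 2}, \quad (h, g) \mapsto \grad(h) \cdot g^{-1},
\]
which is invariant under the $\CC^*$-action $\lambda \cdot (h, g) = (\lambda h, \lambda g)$ and whose image is dense in $W_d$. A direct computation using that the $\GL_2$-stabilizer of $\grad(h)$ is trivial for generic $h$ shows that the generic fibers of $\Phi$ are single $\CC^*$-orbits, so $\Phi$ generically realizes $W_d$ as a $\CC^*$-quotient; one would like to promote this into a global identification $X \gitq \CC^* \iso W_d$, which would give both $(S_2)$ and $(R_1)$ at once.

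The hard part is precisely the behavior at the boundary: $\Phi$ has fibers of dimension strictly greater than one over special points such as the image of $(x^d, I)$, where a direct check yields a $3$-dimensional fiber, so the naive $\CC^*$-quotient identification cannot hold globally. Handling these degenerations is the heart of the matter; possible strategies include passing to a suitable equivariant blow-up, using a finer GIT construction that treats the locus of $d$-th powers separately, or giving a direct Cohen-Macaulayness argument via an explicit free resolution of $\CC[W_d]$ as a module over $\CC[\oh(\CC^2)_{d-1}^{\oplus 2}]$. It is precisely at this commutative-algebra step that the input acknowledged from Erman would enter.
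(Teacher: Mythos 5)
Your overall framework (Serre's criterion: $R_1$ plus $S_2$) is the same as the paper's, and you have correctly sensed that the boundary of $W_d$ is where the difficulty lies; but as written neither half of the criterion is established, and the $R_1$ half rests on a false codimension claim. The complement in $W_d$ of the locus $\GL_2\cdot\grad(\oh(\CC^2)_d)$ is \emph{not} of codimension $\ge 2$: every pair $(f_1,f_2)$ spanning a subspace of the form $\langle L^{d-1},f\rangle$ with $L$ linear lies in $W_d$ (after a basis change one of the two forms is $L^{d-1}$, whose two partials are proportional, so the four partials are dependent), and by Remark \ref{R:codim1} the closure $\bar{\Lambda}$ of this locus is an irreducible subvariety of $W_d$ of dimension $(d-1)+4=d+3$, i.e.\ a divisor in the $(d+4)$-dimensional $W_d$. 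Since $\nabla^{-1}\bigl(\pi_2^{-1}(\pi_2(\langle x^{d-1},y^{d-1}\rangle))\bigr)=\SL_2(x^d+y^d)$, only a low-dimensional sublocus of this divisor consists of actual gradient pairs $g\cdot\grad(h)$; its generic point is a genuine boundary point, so your torsor description says nothing there and no dimension count will dispose of it. A separate smoothness argument at the generic point of $\bar{\Lambda}$ is unavoidable; the paper supplies it by an explicit Jacobian computation showing that $((d-1)x^{d-2}y,\,y^{d-1})\in\bar{\Lambda}$ is a smooth point of $W_d$, and then invokes the irreducibility of $\bar{\Lambda}$.

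For $S_2$ you stop at a list of candidate strategies, and the one you would need is precisely the ``explicit commutative algebra'' option; the key idea you are missing is a determinantal model for $W_d$. The paper shows that $W_d$ agrees set-theoretically with the scheme $W'_d$ cut out by the $4\times 4$ minors of the $4\times(d-1)$ matrix recording the linear dependence of $(f_1)_x,(f_1)_y,(f_2)_x,(f_2)_y$ (the nontrivial inclusion uses a limit of gradients of $h_t=px^d+t\int f_2\,dy$). This $W'_d$ is a linear section, by exactly $2d-4=\codim_{\cD}W'_d$ equations, of the generic determinantal variety $\cD$ of $4\times(d-1)$ matrices of rank $\le 3$, hence is Cohen--Macaulay by Hochster--Eagon; the same smooth point shows $W'_d$ is generically reduced, hence reduced, so $W_d=W'_d$ is Cohen--Macaulay and in particular $S_2$. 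Your proposed identification $\bigl(\oh(\CC^2)_d\times\GL_2\bigr)\gitq\CC^*\cong W_d$ cannot be repaired as stated, for the reason you yourself observe (positive-dimensional fiber directions surviving the one-dimensional group over the boundary), so without the determinantal description the argument does not close.
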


\begin{proof}  Consider the subscheme 
$$W'_d := \{ (f_1, f_2) \, \mid \, (f_1)_{x}, (f_1)_{y}, (f_2)_{x}, (f_2)_{y} \hbox{ are linearly dependent} \} \subseteq \oh(\CC^2)_{d-1}^{\oplus 2};$$
more precisely, if we write $f_1  = \sum_{i=0}^{d-1} {d-1 \choose i} a_i x^{d-1-i}y^i$, $f_2  = \sum_{i=0}^{d-1}  {d-1 \choose i}b_i x^{d-1-i}y^i$ and give $\oh(\CC^2)_{d-1}^{\oplus 2}$ the coordinates $a_0, \ldots, a_{d-1}, b_0, \ldots, b_{d-1}$, then $W'_d$ is defined by the $4 \times 4$ minors of the matrix
$$
\begin{pmatrix}
a_0 & a_1 & a_2 & \cdots & a_{d-2} \\
a_1 & a_2 & a_3 & \cdots & a_{d-1} \\
b_0 & b_1 & b_2 & \cdots & b_{d-2} \\
b_1 & b_2 & b_3 & \cdots & b_{d-1}
\end{pmatrix}.
$$
We first claim that $W_d$ and $W'_d$ agree set-theoretically or, in other words, that
$W_d = (W'_d)_{\red}$.  Indeed, if $(f_1,f_2) = g (f_x, f_y)$ for some $g \in \GL_2$ and $f \in \oh(\CC^2)_d$, then $ (f_1)_{x}, (f_1)_{y}, (f_2)_{x}, (f_2)_{y}$ are linearly dependent.  Conversely,
if the latter condition holds, then  $(a f_1 + b f_2)_y = (c f_1 + e f_2)_x$ for some $(a, b, c, e) \neq 0$.  It follows that there is an element $f \in \oh(\CC^2)_d$ such that $(f_x, f_y) = (a f_1 + b f_2, c f_1 + e f_2)$.  Let $g = \begin{pmatrix} a & c \\ b & e \end{pmatrix}$.  If $g$ is invertible, then $(f_1, f_2) = g (f_x,f_y)$.  Otherwise, $f_x$ and $f_y$ are linearly dependent, hence $f={\ell}^d$ for some $\ell\in\oh(\CC^2)_1$. This implies that $a f_1 + b f_2$ and $c f_1 + d f_2$ are multiples of $\ell^{d-1}$. Assume that $a\ne 0$ and after changing coordinates write $a f_1+b f_2=pdx^{d-1}$, where $p$ is either 0 or 1. Consider the forms $h_t := px^d + t \int f_2 dy$ parametrized by nonzero $t \in \CC$ and set $\sigma_t :=  \begin{pmatrix} a & 0 \\ b & 1 \end{pmatrix}\begin{pmatrix} 1 & 0 \\ 0 & t \end{pmatrix}$.  Then one checks that $\lim_{t \to 0} \sigma_t( (h_t)_x, (h_t)_y)=(f_1, f_2)$, which establishes that $(f_1, f_2) \in W_d$. 

Now consider the determinantal variety $\cD \subseteq \oh(\CC^2)_{d-2}^{\oplus 4}$ parametrizing linearly dependent tuples $(f_1, f_2, f_3, f_4)$; in other words, if $\oh(\CC^2)_{d-2}^{\oplus 4}$ is given coordinates $z_{i,j}$ for $0 \le i \le 3$ and $0 \le j \le d-2$, then $\cD$ is defined by the ideal  generated by the $4 \times 4$ minors of the matrix of indeterminates
$$\begin{pmatrix} 
	z_{0,0} & z_{0,1} & \cdots & z_{0,d-2} \\
	z_{1,0} & z_{1,1} & \cdots & z_{1,d-2} \\
	z_{2,0} &	 z_{2,1} & \cdots & z_{2,d-2} \\
	z_{3,0} & z_{3,1} & \cdots & z_{3,d-2} \\
\end{pmatrix}.
$$
By \cite{eagon-hochster} and  \cite[\S 1C, \S 2B]{bruns-vetter}, $\cD$ is an irreducible Cohen-Macaulay variety of dimension $3d$.  Clearly, $W'_d$ is the intersection of $\cD$ with the subspace defined by the $2d-4$ equations: $z_{0,1}-z_{1,0}, \ldots,$ $z_{0,d-2} - z_{1, d-3}$ and $z_{2,1}-z_{3,0}, \ldots, z_{2,d-2} - z_{3, d-3}$. Further, we have $\dim W'_d = \dim W_d = d+4$. Since 
$\codim_{ \cD} W'_d = \dim \cD - \dim W'_d = 3d - (d+4) = 2d-4$
is the number of defining equations of $W'_d$ in $\cD$, we conclude that $W'_d$ is Cohen-Macaulay (cf.~\cite[Prop.~18.13]{eisenbud}).

We will now show that $W'_d$ is in fact reduced.  It suffices to prove that $W'_d$ is generically reduced since Cohen-Macaulay schemes do not have embedded components, 
and therefore it suffices to show that $W'_d$ is smooth at a single point.  We claim that $W'_d$ is smooth at the point $((d-1)x^{d-2}y, y^{d-1})\in W'_d$.

To show this, change coordinates via $a_1 \mapsto a_1 +1$ and $b_{d-1} \mapsto b_{d-1} +1$, and let $I$ be the ideal  generated by the $4 \times 4$ minors of the matrix
\begin{equation}
\begin{pmatrix}
a_0 & a_1+1 & a_2 & \cdots & a_{d-2} \\
a_1+1 & a_2 & a_3 & \cdots & a_{d-1} \\
b_0 & b_1 & b_2 & \cdots & b_{d-2} \\
b_1 & b_2 & b_3 & \cdots & b_{d-1} +1
\end{pmatrix}.\label{matrixhere}
\end{equation}
If we set 
$
\fp := \bigl(a_0, \ldots,a_{d-1}, b_0, \ldots, b_{d-1}\bigr),  
$
we need to prove that the localization $(\CC[a_0, \ldots,a_{d-1}, b_0, \ldots, b_{d-1}]/I)_{\fp}$ is a regular local ring.  For this, it suffices to show $\dim
\fp/\fp^2 \le \dim W'_d = d+4$.
Now, the minor of \eqref{matrixhere} given by the columns $0,1,i$ and $d-2$ for $i=2, \ldots, d-3$ is equal to $-b_i$ plus higher-order terms. Therefore, the vector space $\fp/\fp^2$ is spanned by $a_0, \ldots, a_{d-1}$, $b_0$, $b_1$, $b_{d-2}$, $b_{d-1}$, hence $\dim
\fp / \fp^2 \le d+4$ as required.

Since $W'_d$ is reduced, we have $W_d = W'_d$, which yields that $W_d$ is Cohen-Macaulay.
The proposition will therefore follow from Serre's criterion if we verify that $W_d$ is smooth away from a subvariety of codimension at least $2$.  By Proposition \ref{P:generically-closed}, we know that $W_d\cap\pi^{-1}(\Gr(2, \oh(\CC^2)_{d-1})^{\ss})$ is smooth at all points except possibly 
along the preimage $\Lambda$ of $\pi_2^{-1} (\pi_2 ( \langle x^{d-1}, y^{d-1} \rangle))$ in $\oh(\CC^2)_{d-1}^{\oplus 2}$.   By Remark \ref{R:codim1}, $\bar{\Lambda} \subseteq W_d$ is an irreducible subvariety of codimension $1$.  Since the point  $((d-1)x^{d-2}y, y^{d-1})$ is both in $\bar{\Lambda}$ and a smooth point of $W_d$, it follows that $W_d \cap\pi^{-1}(\Gr(2, \oh(\CC^2)_{d-1})^{\ss})$ is smooth in codimension $1$.  On the other hand, it is easy to see that $\dim \pi^{-1} (\Gr(2, \oh(\CC^2)_{d-1}) \setminus \Gr(2, \oh(\CC^2)_{d-1})^{\ss}) \le \dim W_d - 2$ by appealing to Proposition \ref{P:geometric-characterization}. Also, the dimension of the the locus $\{ (f_1, f_2) \mid f_1, f_2 \text{ are linearly dependent}\} \subseteq \oh(\CC^2)_{d-1}^{\oplus 2}$ clearly does not exceed $\dim W_d -2$.  Therefore, we conclude that $W_d$ is smooth in codimension $1$.
\end{proof}

\begin{cor} \label{C:normal} \it
The image 
$\bar{\nabla} \big( \PP(\oh(\CC^2)_d)^{\ss} \gitq \SL_2 \big) \subseteq  \Gr(2, \oh(\CC^2)_{d-1})^{\ss} \gitq \SL_2$
is normal.
\end{cor}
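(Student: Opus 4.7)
The plan is to transfer the normality of $W_d$ from Proposition \ref{P:normal} to the image $\bar{Z} := \bar{\nabla}(\PP(\oh(\CC^2)_d)^{\ss} \gitq \SL_2)$ via a two-stage descent of quotients. Let $U \subseteq \oh(\CC^2)_{d-1}^{\oplus 2}$ denote the open locus of linearly independent pairs, so that the quotient morphism $\pi \co U \to \Gr(2, \oh(\CC^2)_{d-1})$ is a $\GL_2$-torsor (acting on the pair, i.e., $\GL_n$ in the notation of Section \ref{assocformfin}). Set
$$ \widetilde{Y} := W_d \cap \pi^{-1}\bigl(\Gr(2, \oh(\CC^2)_{d-1})^{\ss}\bigr), $$
which is an $\SL_2 \times \GL_2$-invariant open subvariety of $W_d$ and therefore normal by Proposition \ref{P:normal}.

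The next step is to identify $\bar{Y} := \pi(\widetilde{Y})$ as a closed $\SL_2$-invariant subvariety of $\Gr(2, \oh(\CC^2)_{d-1})^{\ss}$ satisfying $\pi^{-1}(\bar{Y}) = \widetilde{Y}$ and $\pi_2(\bar{Y}) = \bar{Z}$. Closedness of $\pi(W_d \cap U)$ in $\Gr(2, \oh(\CC^2)_{d-1})$ follows because $\pi$ is a geometric quotient and $W_d \cap U$ is closed and $\GL_2$-invariant in $U$; this simultaneously yields the closedness of $\bar{Y}$ in the semistable locus and the preimage identity $\pi^{-1}(\bar{Y}) = \widetilde{Y}$. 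Since the semistable locus in $\Gr(2, \oh(\CC^2)_{d-1})$ is open, the Remark following Proposition \ref{P:gradient-ss} (asserting that on $\PP(\oh(\CC^2)_d) \setminus \SL_2 x^d$ the form $f$ is semistable iff $\nabla(f)$ is) implies that $\bar{Y}$ coincides with the closure of $Y := \nabla(\PP(\oh(\CC^2)_d)^{\ss})$ in $\Gr(2,\oh(\CC^2)_{d-1})^{\ss}$. Using that $\bar{Z} = \pi_2(Y)$ is closed by the finiteness of $\bar{\nabla}$ (Corollary \ref{C:finite-injective}), the preimage $\pi_2^{-1}(\bar{Z})$ is closed and $\SL_2$-invariant, hence contains $\bar{Y}$, giving $\pi_2(\bar{Y}) = \bar{Z}$.

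The conclusion then proceeds in two descent steps. First, the $\GL_2$-torsor $\widetilde{Y} \to \bar{Y}$ is smooth and surjective, so the normality of $\widetilde{Y}$ descends to $\bar{Y}$. Second, $\bar{Y} \to \bar{Z}$ is a good GIT quotient by the reductive group $\SL_2$, and good quotients of normal varieties by reductive groups are normal: any element of $\Frac(R^{\SL_2})$ integral over $R^{\SL_2}$ is integral over $R$ and $\SL_2$-invariant, hence lies in $R^{\SL_2}$. The main obstacle in the plan is the scheme-theoretic bookkeeping of the identifications $\pi^{-1}(\bar{Y}) = \widetilde{Y}$ and $\pi_2(\bar{Y}) = \bar{Z}$; once these are in place, the normality descent is routine.
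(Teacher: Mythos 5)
Your proof is correct and follows essentially the same route as the paper: the paper's own proof is the one-line observation that the image is the GIT quotient of the normal variety $W_d\cap\pi^{-1}\bigl(\Gr(2,\oh(\CC^2)_{d-1})^{\ss}\bigr)$, and you have simply unwound that quotient into the $\GL_2$-torsor descent followed by the $\SL_2$ good-quotient descent, verifying the needed set-theoretic identifications along the way.
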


\begin{proof} Indeed, $\bar{\nabla} \big( \PP(\oh(\CC^2)_d)^{\ss} \gitq \SL_2 \big)$ is normal as the GIT quotient of the normal variety $W_d\cap\pi^{-1}(\Gr(2, \oh(\CC^2)_{d-1})^{\ss})$.
\end{proof}

\subsection{Proof of Theorem \ref{T:binary}}
 
We now establish:
 
\begin{proposition} \label{C:closed}  The morphism $\bar{\nabla} \co \PP(\oh(\CC^2)_d)^{\ss} \gitq \SL_2 \to \Gr(2, \oh(\CC^2)_{d-1})^{\ss} \gitq \SL_2$ is a closed immersion.
 \end{proposition}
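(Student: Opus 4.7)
The plan is to deduce the result by combining three facts already in hand---Corollary \ref{C:finite-injective} (finiteness and injectivity of $\bar{\nabla}$), Corollary \ref{C:normal} (normality of its image), and Proposition \ref{P:generically-closed} (which will supply birationality onto the image)---via the following standard principle: a finite birational morphism onto a normal variety is an isomorphism (a form of Zariski's main theorem).

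Concretely, I would first denote by $Z$ the (closed) image of $\bar{\nabla}$ in $\Gr(2, \oh(\CC^2)_{d-1})^{\ss} \gitq \SL_2$ and regard $\bar{\nabla}$ as a surjective morphism $\bar{\nabla} \co \PP(\oh(\CC^2)_d)^{\ss} \gitq \SL_2 \to Z$, which is finite by Corollary \ref{C:finite-injective} and has normal target by Corollary \ref{C:normal}. To invoke the principle above, it remains to show that this surjection is birational. This is where Proposition \ref{P:generically-closed} enters: it provides a closed immersion $\nabla$ on the $\SL_2$-invariant open subset $\PP(\oh(\CC^2)_d)^{\ss} \setminus \SL_2(x^d+y^d)$, whose $\SL_2$-GIT quotient is the complement of the single point $\pi_1(x^d+y^d)$ in $\PP(\oh(\CC^2)_d)^{\ss} \gitq \SL_2$. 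Since the latter is an irreducible projective variety of positive dimension, this complement is dense, and its image under $\bar{\nabla}$ is a dense open subset of $Z$ on which $\bar{\nabla}$ is an isomorphism. Hence $\bar{\nabla}\co \PP(\oh(\CC^2)_d)^{\ss} \gitq \SL_2 \to Z$ is birational, and the principle quoted above then gives that it is an isomorphism, so the original $\bar{\nabla}$ is a closed immersion.

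The main---and essentially only---step requiring any care is descending the closed immersion of Proposition \ref{P:generically-closed} to a genuine isomorphism of open subsets of the two GIT quotients. This uses the equality $\nabla^{-1}\big(\pi_2^{-1}(\pi_2(\langle x^{d-1}, y^{d-1}\rangle))\big) = \SL_2(x^d+y^d)$ noted in the paper just before Proposition \ref{P:generically-closed}, together with the fact that good GIT quotients commute with restriction along $\SL_2$-invariant saturated open subsets. Once this descent is in place, the conclusion is a routine invocation of Zariski's main theorem; no further analysis of particular orbits or singularities is required, since all the hard content has been absorbed into the earlier finiteness, injectivity, and normality results.
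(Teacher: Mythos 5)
Your proposal is correct and follows essentially the same route as the paper: the paper's proof simply combines the injectivity and finiteness of $\bar{\nabla}$ from Corollary \ref{C:finite-injective} with the normality of the image from Corollary \ref{C:normal} and invokes Zariski's Main Theorem. The only (harmless) difference is that you justify the degree-one hypothesis by descending Proposition \ref{P:generically-closed} to get explicit birationality onto the image, whereas the paper obtains it directly from injectivity of the finite morphism in characteristic zero; note also that your density argument tacitly assumes $d\ge 4$, the case $d=3$ being trivial as the paper remarks.
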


\begin{proof}
The morphism $\bar{\nabla}$ is injective by Corollary \ref{C:finite-injective}. Since the image is normal by Corollary \ref{C:normal}, Zariski's Main Theorem implies that $\bar{\nabla}$ is an isomorphism onto its image.
\end{proof}

\begin{proof}[Proof of Theorem {\rm \ref{T:binary}}]
This follows by combining Corollary \ref{C:equiv-conj} with Proposition \ref{C:closed}. 
 \end{proof}

\end{document}